\newtheorem{theorem}{Theorem}
\newtheorem{lemma}[theorem]{Lemma}
\theoremstyle{remark}
\newtheorem{remark}{Remark}
\newtheorem*{problem*}{Problem}
\newcommand{\bc}{\mathcal{B}}
\newcommand{\sg}{S_{2g}(V=\square)}
\newcommand{\sgm}{S_{2g-1}(V=\square)}
\newcommand{\fqx}{\mathbb{F}_q[x]}
\numberwithin{theorem}{section} \numberwithin{equation}{section}
\begin{document}

\title{The second and third moment of $L(\frac{1}{2},\chi)$ in the hyperelliptic ensemble}
\date{}
\author{Alexandra Florea}
\address{Department of Mathematics, Stanford University, Stanford, CA 94305}
\email{amusat@stanford.edu}
\maketitle

\begin{abstract}
We obtain asymptotic formulas for the second and third moment of quadratic Dirichlet $L$--functions at the critical point, in the function field setting. We fix the ground field $\mathbb{F}_q$, and assume for simplicity that $q$ is a prime with $q \equiv 1 \pmod 4$.  We compute the second and third moment of $L(1/2,\chi_D)$ when $D$ is a monic, square-free polynomial of degree $2g+1$,  as $g \to \infty$. The answer we get for the second moment agrees with Andrade and Keating's conjectured formula in \cite{conjectures}. For the third moment, we check that the leading term agrees with the conjecture. \end{abstract}

\section{Introduction}

In this paper, we study the second and third moment of quadratic Dirichlet $L$--functions in the function field setting. We obtain asymptotic formulas for 
$$ \sum_{D \in \mathcal{H}_{2g+1}} L \big( \tfrac{1}{2}, \chi_D \big)^k,$$ when $k=2,3$ and $g \to \infty$, where $\mathcal{H}_{2g+1}$ denotes the space of monic, square-free polynomials of degree $2g+1$ over $\mathbb{F}_q[x]$. In our calculation, we take $q$ to be a prime with $q \equiv 1 \pmod 4$. 
More precisely, we prove the following.
\begin{theorem}
Let $q$ be a prime with $q \equiv 1 \pmod 4$. Then
$$ \sum_{D \in \mathcal{H}_{2g+1}} L \big( \tfrac{1}{2}, \chi_D \big)^2 = \frac{q^{2g+1}}{\zeta(2)} P(2g+1) + O(q^{g(1+\epsilon)}),$$ where $\mathcal{H}_{2g+1}$ denotes the space of monic, square-free polynomials of degree $2g+1$ over $\mathbb{F}_q[x]$, $\zeta$ is the zeta function associated with $\mathbb{F}_q[x]$ and $P(x)$ is a polynomial of degree $3$ whose coefficients will be computed explicitly. \label{mresult}
\end{theorem}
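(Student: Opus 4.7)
The plan is to apply an (essentially exact) approximate functional equation for $L(\tfrac{1}{2},\chi_D)^2$ and then analyze the resulting inner character sum by Poisson summation on $\fqx$. Since $L(s,\chi_D)$ is a polynomial of degree $2g$ in $q^{-s}$ when $D$ is squarefree monic of degree $2g+1$, the functional equation yields an identity of the form
\begin{equation*}
L\bigl(\tfrac{1}{2},\chi_D\bigr)^2 \;=\; \doublesum \frac{d(f)\,\chi_D(f)}{\sqrt{|f|}} \;+\; (\text{a similar ``dual'' piece, with a sign from the functional equation}),
\end{equation*}
where $d(\cdot)$ is the divisor function in $\fqx$. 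The first step is to write this identity explicitly and to track the sign factors; the two pieces will contribute essentially the same thing after averaging.

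Averaging over $D\in\mathcal{H}_{2g+1}$ and swapping orders of summation, the problem reduces to estimating
\begin{equation*}
T(f) \;:=\; \sum_{D\in\mathcal{H}_{2g+1}} \chi_D(f)
\end{equation*}
for $f\in\mathcal{M}_{\leq 2g-1}$. I split the outer $f$-sum according to whether $f$ is a perfect square. For $f=h^2$, writing $D=AB^2$ with $A$ squarefree recasts $T(h^2)$ as a sieved count, giving (essentially) $\#\mathcal{H}_{2g+1}\prod_{P\mid h}(1+|P|^{-1})^{-1}$. Plugging back and interchanging sums, the square contribution produces a main term of size $q^{2g+1}P(2g+1)/\zeta(2)$, with the coefficients of the degree-$3$ polynomial $P$ coming from an Euler product of local factors at primes $P\in\fqx$.

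For non-square $f$, I apply quadratic reciprocity (clean in the $q\equiv 1\pmod 4$ case) to write $\chi_D(f)=\chi_f(D)$ up to a fixed sign, and then apply the function-field Poisson summation formula modulo $f$ to the sum $\sum_{D\in\mathcal{H}_{2g+1}}\chi_f(D)$. This converts it into a dual sum over polynomials $V\in\fqx$ of degree less than $\deg f$, weighted by a Gauss-type character sum. Separating the dual contributions according to whether $V$ itself is a square produces the pieces denoted $\sg$ and $\sgm$ (from $V=\square$), together with the error pieces $\onesquare$ and $\twosquare$ (and their $V\neq\square$ analogues). The $V=\square$ contribution combines with the main term above and completes the polynomial $P(2g+1)$.

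The main obstacle is the bound on the $V\neq\square$ contribution: one needs genuine cancellation in a double sum over $f$ (weighted by $d(f)/\sqrt{|f|}$) and $V$, since trivial pointwise bounds after Poisson only give $O(q^{2g+\epsilon})$. Square-root cancellation in the relevant character sum follows from Weil-type bounds in $\fqx$, but one first has to arrange matters so that the modulus of the character really is non-square; the careful bookkeeping needed to extract the sharp savings $O(q^{g(1+\epsilon)})$ is where I expect the bulk of the technical effort to go.
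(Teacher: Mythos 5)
Your overall skeleton --- exact functional equation for $L(\tfrac12,\chi_D)^2$, averaging over $D$, and a split of the resulting sums into square $f$, square dual variable $V$, and non-square $V$ after Poisson summation --- is the same as the paper's. But there is a genuine gap at the step you dismiss in one sentence: the contribution of square $V\neq 0$. That term is of size $g\,q^{2g+1}$, i.e.\ much larger than the target error $O(q^{g(1+\epsilon)})$, so it cannot simply be said to ``combine with the main term and complete the polynomial''; it must be evaluated asymptotically with an admissible error, and this is in fact the most delicate part of the whole argument (not the non-square $V$ bound, which you identify as the main obstacle). In the paper this evaluation is done by encoding the sum over $f$ and $l$ (with $V=l^2$) of $d_2(f)G(l^2,\chi_f)|f|^{-1/2}$ into a two-variable Euler product $\mathcal{R}(z,w)=\mathcal{Z}(z)\mathcal{Z}(qw^2z)\mathcal{Z}\bigl(1/(q^2z)\bigr)\mathcal{F}(z,w)$, taking the residue at the double pole $w=1/q$ (which produces the factor linear in $g$), and then computing the remaining $z$-integral \emph{exactly}: the integrand has no expansion at $z=0$ and is analytic only in the annulus $1/q<|z|<q$, so one can neither shrink the contour nor push it outward with a useful bound; instead the paper uses the symmetries $\mathcal{F}(z)=\mathcal{F}(1/z)$ and $\alpha(1/z)=\alpha(z)-\tfrac{2(1+z)}{1-z}-4z\,\mathcal{F}'(z)/\mathcal{F}(z)$ under $z\mapsto 1/z$ to show the integral equals minus one half of the residue at $z=1$. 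Your proposal contains no mechanism of this kind (nor an alternative, e.g.\ a multiple Dirichlet series argument), and without one you obtain only the leading $g^3$ behaviour together with an unevaluated term of size $g\,q^{2g+1}$, which is weaker than the stated theorem.

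Two smaller points. First, the function-field Poisson formula applies to sums of $\chi_f(h)$ over \emph{all} monic $h$ of a fixed degree, so before using it you must remove the square-free condition on $D$ (the paper's Lemma expressing $\sum_{D\in\mathcal{H}_{2g+1}}\chi_f(D)$ via a sum over $C\mid f^{\infty}$ and full monic ranges); in your sketch you perform such a sieve only in the square-$f$ case. Second, after Poisson the dual variable $V$ runs over degrees roughly $d(f)-2g-2+2d(C)$, not merely degree less than $\deg f$; this shortness is what makes the method win, and the $V\neq\square$ estimate is then obtained not from a bare Weil character-sum bound but from Lindel\"of-type bounds on $\mathcal{L}(w,\chi_V)$ on circles $|w|=q^{-1/2-\epsilon}$ inside a contour-integral representation. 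Your description of that part is in the right spirit, but it is the more routine half of the proof.
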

\begin{theorem}
Under the same assumptions as above, we have that
$$ \sum_{D \in \mathcal{H}_{2g+1}} L \big( \tfrac{1}{2}, \chi_D \big)^3 = \frac{q^{2g+1}}{\zeta(2)} Q(2g+1) + O(q^{3g/2(1+\epsilon)}),$$ where $Q(x)$ is a polynomial of degree $6$ whose coefficients can be computed explicitly.
\label{th2}
\end{theorem}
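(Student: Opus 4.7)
The approach follows the template used for the second moment (Theorem~\ref{mresult}), but the more rapid growth of the cubic divisor function $d_3$ and the longer effective range of the cubed $L$-function shrink the available saving from $q^g$ to $q^{g/2}$. I would begin by expanding $L(1/2,\chi_D)^3$ as a Dirichlet polynomial in $\chi_D(f)$: since $L(s,\chi_D)$ is a polynomial of degree $2g$ in $q^{-s}$, applying the functional equation to each of the three factors lets me truncate the cube as
\[
L\!\left(\tfrac12,\chi_D\right)^3 = \sum_{\substack{f\in \mathcal{M} \\ \deg f \leq 3g}} \frac{d_3(f)\chi_D(f)}{q^{\deg f/2}} \;+\; \bigl(\text{dual terms}\bigr),
\]
where the dual pieces carry the appropriate root-number factors from the functional equation. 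Interchanging the order of summation brings $\sum_{D\in \mathcal{H}_{2g+1}}\chi_D(f)$ inside, which I would split according to whether $f$ is or is not a perfect square.

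The diagonal $f=h^2$ contribution produces the main term. The standard expression for $\sum_{D\in\mathcal{H}_{2g+1}}\chi_D(h^2)$ in terms of the count of monic square-free $D$ coprime to $h$ reduces the diagonal to a Dirichlet series $\sum_{h}d_3(h^2)/|h|^s$ weighted by local factors, which admits an Euler product expressible through $\zeta$. Extracting the pole of sufficiently high order at $s=1$ (together with the functional-equation symmetry in $q^{-s}$) recovers the prefactor $q^{2g+1}/\zeta(2)$ and the polynomial $Q(2g+1)$ of degree $6$, the sixth-order pole in the Dirichlet series for $d_3(h^2)$ accounting for the degree of $Q$.

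The off-diagonal $f\neq \square$ case constitutes the bulk of the work. Here I would apply the Poisson-type summation formula for the quadratic character sum $\sum_{D\in \mathcal{H}_{2g+1}}\chi_D(f)$ in the hyperelliptic ensemble, which converts it into a short ``dual'' sum involving $\chi_f(V)$ for monic $V$ of degree $\leq \deg f - (2g+1)$, weighted by a Gauss-type sum. The main obstacle, and what controls the final error, is bounding the resulting triple sum over $f$, $V$, and the square/non-square decomposition of $V$: the $V=\square$ piece has to be handled separately (and contributes additional secondary main terms through another Dirichlet-series analysis), while the oscillatory $V\neq \square$ piece must be bounded using the square-root cancellation available from Weil's theorem for character sums over curves. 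Because the $f$-sum extends to degree $3g$ and $d_3$ is larger on average than $d_2$, the net saving against the main term $q^{2g+1}$ is only $q^{g/2}$, producing the error $O(q^{3g/2(1+\epsilon)})$.
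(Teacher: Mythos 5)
Your skeleton (exact functional equation for $L(\tfrac12,\chi_D)^3$, reduction to sums over monic polynomials, Poisson summation, and a split of the dual variable $V$ into square and non-square) is the same as the paper's, but there is a genuine gap in how you apportion the main term. You assert that the diagonal $f=h^2$ alone recovers the degree-$6$ polynomial $Q(2g+1)$, with the $V=\square$ dual piece only contributing ``additional secondary main terms''. For the third moment this is not so: the square-$V$ contribution is of the \emph{same} size $q^{2g+1}(2g+1)^6$ as the diagonal. In the paper $Q=Q_1+Q_2$, where $Q_1$ (from square $f$, Lemma \ref{thirdmain}) and $Q_2$ (from square $V$, Lemma \ref{secondarythird}) both have degree $6$; indeed $[x^6]Q_1=\frac{729}{2^{11}\,6!}\mathcal{B}_3(1/q)$ while $[x^6]Q_2=-\frac{217}{2^{11}\,6!}\mathcal{H}(1,1/q)\zeta(2)^4$, and only their sum reproduces the conjectured leading coefficient $\frac{1}{2880}A_3(\tfrac12;0,0,0)$ of \eqref{thirdconj}. (This is a real departure from $k=2$, where the secondary term is linear against a cubic main term, and from $k=1$, where it is absent.) So your plan must include an exact evaluation of the square-$V$ sum to top order --- multiplicativity of $G(l^2,\chi_f)$ in $f$, the generating-function identities of Lemmas \ref{sumf} and \ref{sumv}, and a double contour integral with residues extracted at $w=1/q$ and then at $z=1$ --- rather than treating it as a lower-order correction; otherwise the resulting polynomial, including its leading coefficient, is wrong.

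A second, smaller but still substantive point: for the non-square $V$ piece, invoking ``square-root cancellation from Weil'' is not by itself sufficient for the stated error when $q$ is fixed. One must first resum over $f$, using multiplicativity of the Gauss sums, into $\mathcal{L}(w,\chi_V)^3$ times a convergent Euler product (Lemma \ref{sumf}); the Weil/convexity bound $|\mathcal{L}(u,\chi_V)|\le(1+\sqrt q\,|u|)^{\deg V-1}$ at $|u|=q^{-1/2}$ costs a factor $2^{\deg V}$ per $L$-factor, i.e.\ up to about $q^{9g\log_q 2}$ here since $\deg V$ can be of order $3g$ and three factors occur, which overwhelms the target saving for small $q$. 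The paper instead applies a Lindel\"of-type bound $|\mathcal{L}(wu^j,\chi_V)|\ll e^{\deg V/(2\log_q \deg V)+4\sqrt{q\deg V}}$ on the circle $|w|=q^{-1/2-\epsilon}$, after which the $V$-sum is bounded trivially to give $O(q^{3g/2(1+\epsilon)})$; you would need this input (or a large-$q$ hypothesis) to close your argument. Note also that errors of the same size $q^{3g/2(1+\epsilon)}$ already arise elsewhere (the $C\in\mathcal{M}_g$ truncation and the contour shift in the square-$V$ evaluation), not only from the oscillatory piece.
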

There has been a long-standing interest in understanding moments of families of $L$--functions. For the zeta-function, if we define
$$M_k(T) = \frac{1}{T} \int_0^T \left| \zeta \left( \frac{1}{2} + it \right) \right| ^{2k} \, dt,$$ the problem is to find asymptotic formulas for $M_k$ as $T \to \infty$. The leading term for the second moment was computed in \cite{hardyl} to be
$$ M_1 \sim \log T,$$ and the fourth moment leading term was computed by Ingham \cite{ingham}
$$M_2(T) \sim \frac{1}{2 \pi^2} \log^4(T).$$ No other higher moments have been computed so far, but it is conjectured that
$$M_k(T) \sim C_k (\log T)^{k^2},$$ for all $k>0$.  A precise value for $C_k$ was conjectured by Keating and Snaith \cite{ksnaith} using random matrix theory.

One can look at other families of $L$--functions. For example, considering the family of Dirichlet $L$-functions $L(s,\chi_d)$, we are interested in
\begin{equation} 
\sum_{0<d \leq D} L \left( \frac{1}{2}, \chi_d \right)^k,
\label{mom}
\end{equation} where the sum is over real primitive Dirichlet characters. It is conjectured that the $k^{\text{th}}$ moment above is asymptotic to $ C_k D (\log D)^{k(k+1)/2}.$ Jutila \cite{jutila} computed the first and second moment and Soundararajan \cite{sound} computed the second moment $\sum L \left( \frac{1}{2}, \chi_{8d} \right)^2$ and the third moment $\sum L \left( \frac{1}{2}, \chi_{8d} \right)^3$, where the sum is over square-free, odd, positive $d$. Keating and Snaith \cite{keatingsnaith} conjectured the leading term for \eqref{mom}, again using random matrix theory. The other principal lower order terms have been conjectured by Conrey, Farmer, Keating, Rubinstein and Snaith in \cite{cfkrs}. 

In \cite{dgh}, Diaconu, Goldfeld and Hoffstein use multiple Dirichlet series to study the moments of $L(1/2,\chi_d)$. Their work suggests the existence of a lower order term of size $X^{3/4}$ for the cubic moment. Zhang \cite{zhang} conjectured a value for the constant associated with this term. Young \cite{young2} considered the smoothed third moment of this family of $L$-functions and bounded the remainder term by $O(X^{3/4+\epsilon})$.

In this paper, we are interested in the analogous problem of moments of $L$-functions over function fields. Andrade and Keating \cite{keatingandrade} computed the mean value of $L(1/2,\chi_D)$ averaged over monic square-free polynomials of degree $2g+1$. When the cardinality of the field $\mathbb{F}_q$ is $q \equiv 1 \pmod 4$, they proved that
\begin{equation}
 \sum_{D \in \mathcal{H}_{2g+1}} L \big( \tfrac{1}{2}, \chi_D \big) = \frac{P(1)}{2 \zeta(2)} q^{2g+1} \left[ (2g+1)+1 + \frac{4} {\log q} \frac{P'}{P}(1) \right] + O(q^{(2g+1)(3/4+\frac{ \log_q 2}{2})}),
 \label{first}
 \end{equation}  where $\mathcal{H}_{2g+1}$ denotes the space of monic, square-free polynomials of degree $2g+1$ over $\mathbb{F}_q[x]$ and
$$P(s) = \prod_{\substack{P \text{ monic} \\ \text{irreducible}}} \left( 1- \frac{1}{(|P|+1)|P|^s} \right).$$ 

Extending the recipe in \cite{cfkrs} to the function field setting, Andrade and Keating \cite{conjectures} conjectured formulas for integral moments of $L$-functions over function fields. More precisely, they conjectured that
\begin{equation}
 \sum_{D \in \mathcal{H}_{2g+1}} L \big( \tfrac{1}{2}, \chi_D \big)^k =  \sum_{D \in \mathcal{H}_{2g+1}} Q_k (2g+1)(1+ o(1)), \label{conjmoment}
\end{equation}
where $Q_k$ is a polynomial of degree $k(k+1)/2$ given by 
$$ Q_k(x) = \frac{ (-1)^{k(k-1)/2} 2^k}{k!} \frac{1}{(2 \pi i)^k} \oint \ldots \oint \frac{G(z_1, \ldots, z_k) \Delta(z_1^2, \ldots, z_k^2)^2}{ \prod_{j=1}^k z_j^{2k-1}} q^{\frac{x}{2} \sum_{j=1}^k z_j } \, dz_1 \ldots dz_k,$$ and
$$G(z_1, \ldots,z_k) = A \left( \frac{1}{2};z_1,\ldots,z_k \right) \prod_{j=1}^k X \left( \frac{1}{2} + z_j \right)^{-1/2} \prod_{1 \leq i \leq j \leq k} \zeta(1+z_i+z_j).$$
In the above,
$$X(s)= q^{-1/2+s},$$ and
\begin{align}
 A \left( \frac{1}{2}; z_1, \ldots, z_k \right) &= \prod_{\substack{P \text{ monic} \\ \text{irreducible}}} \prod_{1 \leq i \leq j \leq k} \left( 1- \frac{1}{|P|^{1+z_i+z_j}} \right) \nonumber \\
 & \times \left( \frac{1}{2} \left( \prod_{j=1}^k \left(1- \frac{1}{|P|^{1/2+z_j}} \right)^{-1} +  \prod_{j=1}^k \left(1+ \frac{1}{|P|^{1/2+z_j}} \right)^{-1} \right) +\frac{1}{|P|} \right) \left( 1+ \frac{1}{|P|} \right)^{-1}. \label{az}
\end{align}
For $k=1$, the conjecture above agrees with the computed first moment \eqref{first}. Recently, Rubinstein and Wu \cite{rubinstein} provided numerical evidence in favor of these conjectures. 

When $k=2$, the conjectured formula \eqref{conjmoment} simplifies to $  \sum_{D \in \mathcal{H}_{2g+1}} L \left( \frac{1}{2}, \chi_D \right)^2 = \frac{q^{2g+1}}{\zeta(2)} R(2g+1) + o(q^{2g+1}), $ where
\begin{align}
R(x) &= \frac{1}{24 \log(q)^3} \bigg[ (6+11x+6x^2+x^3) A(0,0) (\log q)^3 \nonumber \\
&+ (11+12x+3x^2) (\log q)^2 (A_2(0,0)+A_1(0,0)) + 12(2+x) (\log q) A_{12}(0,0)  \nonumber \\
& - 2(A_{222}(0,0)-3A_{122}(0,0)-3A_{112}(0,0)+A_{111}(0,0)) \bigg], \label{conjpol}
\end{align} where the $A_j$ above are partial derivatives of $A (1/2;z_1,z_2)$ evaluated at $z_1=z_2=0$. Our answer in Theorem \ref{mresult} agrees with the conjecture \eqref{conjpol}.

For the third moment, Andrade and Keating \cite{conjectures} conjecture that
\begin{equation} \sum_{D \in \mathcal{H}_{2g+1}} L \big( \tfrac{1}{2} ,\chi_D \big)^3 \sim \frac{1}{2880 \zeta(2)} A_3 \big( \tfrac{1}{2}; 0,0,0 \big) |D| (\log_q|D|)^6, \label{thirdconj} \end{equation} with
\begin{equation}
A_3 \big( \tfrac{1}{2}; 0,0,0 \big) = \prod_P \left( 1 - \frac{12|P|^5-23|P|^4+23|P|^3-15|P|^2+6|P|-1}{|P|^6(|P|+1)} \right). \label{a3} \end{equation}
We obtain an asymptotic formula for the third moment with an error of size $O(q^{3g/2(1+\epsilon)})$ and we check that the leading term agrees with \eqref{thirdconj}. Checking by hand that all the other lower order terms match the conjecture \eqref{conjmoment} involves laborious computations, and we do not carry them out here. 
\section{Background and setup of the problem} 
 We introduce the notation we use throughout the paper. Let $\mathcal{M}$ denote the monic polynomials over $\fqx, \mathcal{M}_n$ the monic polynomials of degree $n$ over $\fqx$ and $\mathcal{M}_{\leq n}$ the monic polynomials of degree less than or equal to $n$. Then $| \mathcal{M}_n | = q^n$ and $|\mathcal{M}_{\leq n} | = 1+q+ \ldots+ q^n= (q^{n+1}-1)/(q-1)$. 

Let $\mathcal{H}_{d,q}$ denote the set of monic square-free polynomials of degree $d$ over $\fqx$. For ease of notation, we will write it as $\mathcal{H}_d$. The norm of a polynomial $f \in \mathbb{F}_q[x]$ is defined as $|f|=q^{d(f)}$, where for simplicity, $d(f) = \deg(f)$. $d_k(f)$ will denote the $k^{\text{th}}$ divisor function (i.e. $d_k(f) = \displaystyle \sum_{f_1 \cdot \ldots \cdot f_k =f} 1$.)  
From now on, $P$ will be used to denote a monic irreducible polynomial. 
\subsection{Basic facts about $L$-functions over function fields}
Many of the facts stated in this section are proven in \cite{rosen}. 

For $\text{Re}(s)>1$, the zeta function of $\mathbb{F}_q[x]$ is defined by
$$\zeta(s) = \sum_{f \in \mathcal{M}} \frac{1}{|f|^s} = \prod_P (1- |P|^{-s})^{-1}.$$ One can show that $\zeta(s) = (1-q^{1-s})^{-1}$. With the change of variables $u=q^{-s}$, we have $\mathcal{Z}(u)=(1-qu)^{-1}$. 

To determine the cardinality of $\mathcal{H}_n$, consider the generating series
$$ \sum_{\substack{D \text{ monic} \\ \text{square-free}}} u^{d(D)} = \frac{\mathcal{Z}(u)}{\mathcal{Z}(u^2)} = \frac{1-qu^2}{1-qu}.$$ Looking at the coefficient of $u^n$, we see that for $n=1, |\mathcal{H}_1| = q$ and for $n \geq 2, |\mathcal{H}_n|=q^n(1-1/q)= q^n/\zeta(2)$.

For a monic irreducible polynomial $P$, define the quadratic residue $\displaystyle \left( \frac{f}{P} \right)$ by
$$\left( \frac{f}{P} \right) = 
\begin{cases}
1 & \mbox{ if } P \nmid f  \text{ and } f \text{ is a square} \pmod P \\
-1 & \mbox{ if } P \nmid f  \text{ and } f \text{ is not a square} \pmod P \\
0 & \mbox{ if } P|f .
\end{cases}$$
If $Q= P_1^{e_1} \cdot \ldots \cdot P_k^{e_k}$ is the prime factorization of $Q$ in $\mathbb{F}_q[x]$, then the Jacobi symbol is defined by
$$ \left( \frac{f}{Q} \right) = \prod_{i=1}^k \left( \frac{f}{P_i} \right)^{e_i}.$$
Artin proved the quadratic reciprocity law over function fields, namely that if $A, B \in \mathbb{F}_q[x]$ are non-zero, relatively prime monic polynomials, then
$$ \left( \frac{A}{B} \right) = \left( \frac{B}{A} \right) (-1)^{((q-1)/2) d(A) d(B)}.$$
For $D \in \mathbb{F}_q[x]$, the Dirichlet character $\chi_D$ is defined by $$\chi_D(f) = \left( \frac{D}{f} \right).$$ The $L$-function associated to $\chi_D$ is defined by
$$L(s,\chi_D) = \sum_{f \in \mathcal{M}} \frac{\chi_D(f)}{|f|^s} = \prod_P (1- \chi_D(P) |P|^{-s})^{-1}.$$ This converges for $\text{Re}(s)>1$. Using the change of variables $u=q^{-s}$,
$$\mathcal{L}(u,\chi_D) =  \prod_P (1- \chi_D(P) u^{d(P)})^{-1}.$$ One can show that when $D$ is a non-square polynomial, $\mathcal{L}(u,\chi_D)$ is a polynomial in $u$ of degree at most $d(D)-1$.

When $D$ is a monic square-free polynomial, the completed $L$--function is defined by
$$ \mathcal{L}(u,\chi_D) = (1-u)^{\lambda} \mathcal{L}^{*} (u,\chi_D),$$ where $$ \lambda = 
\begin{cases} 
1 & \mbox{ if } d(D) \text{ even } \\
0 & \mbox{ if } d(D) \text{ odd } 
\end{cases} $$
Then $\mathcal{L}^{*}(u,\chi_D)$ is a polynomial of degree $2 \delta = d(D)-1-\lambda$ and satisfies the functional equation
$$ \mathcal{L}^{*}(u , \chi_D) = (qu^2)^{\delta} \mathcal{L}^{*} (1/(qu),\chi_D).$$ In particular, if $D \in \mathcal{H}_{2g+1}$, then $\mathcal{L}(u,\chi_D)$ is a polynomial of degree $2g$ satisfying the above functional equation.

We can relate the $L$--function to zeta functions of curves. If $C$ is a smooth, projective, geometrically connected curve of genus $g$ over $\mathbb{F}_q$, then the zeta function of $C$ is defined by
$$Z_C (u) = \exp \left( \sum_{r=1}^{\infty} N_r(C) \frac{u^r}{r} \right), $$ where $N_r(C)$ is the number of points on $C$ with coordinates in $\mathbb{F}_{q^r}$. Weil \cite{weil} proved that the zeta function of $C$ is a rational function, equal to
$$ Z_C(u) = \frac{P_C(u)}{(1-u)(1-qu)},$$ where $P_C(u)$ is a polynomial of degree $2g$. The Riemann hypothesis for curves over finite fields was proven by Weil \cite{weil} and states that the zeros of the polynomial $P_C(u)$ all lie on the circle $|u|=q^{-1/2}$. 

When $D$ is monic and square-free, the equation $y^2=D(x)$ defines a projective, connected, hyperelliptic curve. The polynomial $P_{C_D}(u)$ that appears in the zeta function of $C_D$ coincides with the completed $L$--function $\mathcal{L}^{*}(u,\chi_D)$, as proven in Artin's thesis.
\subsection{Preliminary lemmas}
We will quote a number of lemmas we will use in the paper. We assume for simplicity that $q$ is a prime with $q \equiv 1 \pmod 4$.

The following exact formula is an analogue of the approximate functional equation for $L(1/2, \chi_d)$ in the number field setting. 
\begin{lemma}
Let $D \in \mathcal{H}_{2g+1}$. For $k$ an integer, we have the following functional equation:
$$ L\big(\tfrac{1}{2}, \chi_D\big)^k=  \sum_{f \in \mathcal{M}_{\leq kg}} \frac{\chi_D(f) d_k(f)}{\sqrt{|f|}}+   \sum_{f \in \mathcal{M}_{\leq kg-1}} \frac{\chi_D(f) d_k(f)}{\sqrt{|f|}},$$ where $d_k$ is the $k^{\text{th}}$ divisor function.
\label{fe}
\end{lemma}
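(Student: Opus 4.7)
The plan is to deduce this from the functional equation satisfied by $\mathcal{L}(u,\chi_D)$, raised to the $k$-th power, together with a split of the resulting polynomial at its midpoint. Since $D \in \mathcal{H}_{2g+1}$, the excerpt tells us that $\mathcal{L}(u,\chi_D)$ is a polynomial in $u$ of degree $2g$ satisfying
$$\mathcal{L}(u,\chi_D) = (qu^2)^g \mathcal{L}\bigl(1/(qu),\chi_D\bigr),$$
so $\mathcal{L}(u,\chi_D)^k$ is a polynomial in $u$ of degree $2gk$ satisfying
$$\mathcal{L}(u,\chi_D)^k = (qu^2)^{gk} \mathcal{L}\bigl(1/(qu),\chi_D\bigr)^k.$$

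First I would identify the coefficients of this polynomial. Formally expanding the Euler product gives $\mathcal{L}(u,\chi_D)^k = \sum_{f \in \mathcal{M}} \chi_D(f) d_k(f) u^{d(f)}$ for $|u|$ small, and since the left hand side is a polynomial of degree $2gk$, writing $c_n = \sum_{f \in \mathcal{M}_n} \chi_D(f) d_k(f)$ we obtain
$$\mathcal{L}(u,\chi_D)^k = \sum_{n=0}^{2gk} c_n u^n,$$
with all higher coefficients vanishing. Substituting this expansion into both sides of the functional equation and equating coefficients of $u^n$ yields the symmetry $c_{2gk-n} = q^{gk-n} c_n$.

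Next I would evaluate at $u = q^{-1/2}$, which recovers $L(1/2,\chi_D)^k$, and split the resulting finite sum at the midpoint $n = gk$:
$$L\bigl(\tfrac12,\chi_D\bigr)^k = \sum_{n=0}^{gk} c_n q^{-n/2} + \sum_{n=gk+1}^{2gk} c_n q^{-n/2}.$$
In the tail, the substitution $m = 2gk - n$ combined with the symmetry $c_{2gk-m} = q^{gk-m} c_m$ gives
$$\sum_{n=gk+1}^{2gk} c_n q^{-n/2} = \sum_{m=0}^{gk-1} c_m q^{gk-m} q^{-(2gk-m)/2} = \sum_{m=0}^{gk-1} c_m q^{-m/2},$$
so the two halves become exactly the two claimed sums over $\mathcal{M}_{\leq gk}$ and $\mathcal{M}_{\leq gk-1}$ after reinterpreting $c_n q^{-n/2} = \sum_{f \in \mathcal{M}_n} \chi_D(f) d_k(f)/\sqrt{|f|}$.

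There is no real obstacle here: the only bookkeeping item worth double-checking is the exponent of $q$ arising when combining the symmetry relation with the substitution $u = q^{-1/2}$, so that the tail sum produces precisely $q^{-m/2}$ (rather than a different power) and runs up to $gk-1$ (rather than $gk$), which is what creates the asymmetric pair $\mathcal{M}_{\leq gk}$ and $\mathcal{M}_{\leq gk-1}$ in the final formula.
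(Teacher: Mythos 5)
Your argument is correct: the symmetry $c_{2gk-n}=q^{gk-n}c_n$ follows from the $k$-th power of the functional equation for the degree-$2g$ polynomial $\mathcal{L}(u,\chi_D)$, and evaluating at $u=q^{-1/2}$ with the split at $n=gk$ gives exactly the two sums over $\mathcal{M}_{\leq kg}$ and $\mathcal{M}_{\leq kg-1}$. The paper omits this proof and simply points to the analogous argument of Andrade and Keating, which proceeds in essentially the same way, so your proposal just fills in the omitted standard details.
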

\begin{proof}
The proof is similar to the proof of the functional equation of $L(1/2, \chi_P)^2$, with $P$ a monic irreducible polynomial in \cite{ak2} and we will omit it. \end{proof}

We also need the following lemma, whose proof can be found in \cite{aflorea}. 
\begin{lemma}
For $f$ a monic polynomial in $\mathbb{F}_q[x]$, we have that
$$ \sum_{D \in \mathcal{H}_{2g+1}} \chi_f(D)= \sum_{C | f^{\infty}} \sum_{h \in \mathcal{M}_{2g+1-2 d(C)}} \chi_f(h) - q \sum_{C | f^{\infty}} \sum_{h \in \mathcal{M}_{2g-1-2 d(C)}} \chi_f(h),$$
where the first sum is over monic polynomials $C$ whose prime factors are among the prime factors of $f$. \label{firstpoint} \end{lemma}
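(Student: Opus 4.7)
The plan is to extract the coefficient of $u^{2g+1}$ from a suitably twisted generating series, exploiting that $\mu^2(D)$ is the indicator of square-freeness. Writing
$$\sum_{D \in \mathcal{M}} \chi_f(D) \mu^2(D) u^{d(D)} = \prod_{P \nmid f}\bigl(1 + \chi_f(P) u^{d(P)}\bigr),$$
the sum $\sum_{D \in \mathcal{H}_{2g+1}} \chi_f(D)$ becomes the coefficient of $u^{2g+1}$ on the right-hand side, which is what I aim to evaluate.

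The key manipulation is the elementary factorization $1 + \chi_f(P) u^{d(P)} = (1 - u^{2d(P)})/(1 - \chi_f(P) u^{d(P)})$, valid since $\chi_f(P)^2 = 1$ whenever $P \nmid f$. Applying this term by term, the Euler product becomes $\mathcal{L}(u, \chi_f) \prod_{P \nmid f}(1 - u^{2d(P)})$. Using $\prod_P(1 - u^{2d(P)}) = 1/\mathcal{Z}(u^2) = 1 - qu^2$ to separate out the primes dividing $f$ gives
$$\prod_{P \nmid f}\bigl(1 - u^{2d(P)}\bigr) = (1 - qu^2) \sum_{C \mid f^{\infty}} u^{2d(C)},$$
since $\prod_{P \mid f}(1 - u^{2d(P)})^{-1}$ expands, via the geometric series at each prime dividing $f$, as the generating series of monic polynomials $C$ whose prime factors all divide $f$.

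Putting the pieces together, the original generating series equals $(1 - qu^2)\,\mathcal{L}(u, \chi_f)\sum_{C \mid f^{\infty}} u^{2d(C)}$. To finish, I would expand $\mathcal{L}(u, \chi_f) = \sum_{h \in \mathcal{M}} \chi_f(h) u^{d(h)}$ and read off the coefficient of $u^{2g+1}$: the $1$-part of $(1 - qu^2)$ contributes $\sum_{C \mid f^{\infty}} \sum_{h \in \mathcal{M}_{2g+1 - 2d(C)}} \chi_f(h)$, while the $-qu^2$ part subtracts $q \sum_{C \mid f^{\infty}} \sum_{h \in \mathcal{M}_{2g-1 - 2d(C)}} \chi_f(h)$, matching the claimed identity on the nose. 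This is really a formal manipulation rather than a deep argument; the only point to watch is that when $2g+1 - 2d(C) < 0$ (or $2g - 1 - 2d(C) < 0$) the corresponding inner sum is empty by convention, so the sums on the right are genuinely finite and no convergence issues arise.
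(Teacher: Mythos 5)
Your proof is correct: the Euler-product identity $1+\chi_f(P)u^{d(P)}=(1-u^{2d(P)})/(1-\chi_f(P)u^{d(P)})$ for $P\nmid f$, the evaluation $\prod_P(1-u^{2d(P)})=1-qu^2$, the expansion $\prod_{P\mid f}(1-u^{2d(P)})^{-1}=\sum_{C\mid f^{\infty}}u^{2d(C)}$, and the extraction of the coefficient of $u^{2g+1}$ all check out, and the negative-degree sums being empty handles the finiteness point. The paper itself gives no argument here but defers to \cite{aflorea}, and your generating-series derivation is essentially the same standard computation carried out there, so nothing is missing.
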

We will now state a version of Poisson summation over function fields. Recall the exponential function introduced in \cite{hayes}. For $a \in \mathbb{F}_q ((1/x ))$,  let
$$ e(a) = e^{2 \pi i a_1/q},$$ where $a_1$ is the coefficient of $1/x$ in the expansion of $a$ (for more details, see \cite{hayes}.) For $\chi$ a general character $\pmod f$, define the generalized Gauss sum as
$$ G(V,\chi) = \sum_{u \pmod f} \chi(u) e \left( \frac{uV}{f} \right).$$
The following Poisson summation formula holds.
\begin{lemma}
Let $f$ be a monic polynomial of degree $n$ in $\mathbb{F}_q[x]$ and $m$ a positive integer. If $d(f)$ is even, then
$$ \sum_{g \in \mathcal{M}_m} \chi_f(g) = \frac{q^m}{|f|} \left[ G(0,\chi_f) + (q-1) \sum_{V \in \mathcal{M}_{\leq n-m-2} } G(V,\chi_f) - \sum_{V \in \mathcal{M}_{n-m-1}} G(V,\chi_f) \right].$$
If $d(f)$ is odd, then
$$ \sum_{g \in \mathcal{M}_m} \chi_f(g) = \frac{q^m}{|f|} \sqrt{q} \sum_{V \in \mathcal{M}_{n-m-1}} G(V,\chi_f).$$
\label{poissonmonic}
\end{lemma}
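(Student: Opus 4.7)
The plan is to apply Fourier inversion on the additive group $\mathbb{F}_q[x]/(f)$: since $\chi_f$ is $f$-periodic, we may write
$$\chi_f(g) = \frac{1}{|f|} \sum_{V \bmod f} G(V,\chi_f)\, e\!\left(-\frac{gV}{f}\right),$$
where $V$ runs over representatives of degree $\le n-1$. Substituting into $\sum_{g \in \mathcal{M}_m} \chi_f(g)$ and interchanging summations reduces the problem to evaluating the additive character sum $S(V) := \sum_{g \in \mathcal{M}_m} e(-gV/f)$ for each such $V$.

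To compute $S(V)$, expand $V/f = \sum_{j \ge 1} a_j x^{-j}$ in $\mathbb{F}_q((1/x))$, noting that the first nonzero coefficient is $a_{n-d(V)}$. Writing $g = x^m + g_{m-1} x^{m-1} + \cdots + g_0$, the coefficient of $x^{-1}$ in $gV/f$ is the affine expression $a_{m+1} + a_m g_{m-1} + \cdots + a_1 g_0$. Orthogonality of additive characters on $\mathbb{F}_q^m$ then yields three cases: $S(V) = q^m$ when $d(V) \le n-m-2$ (so that $a_1 = \cdots = a_{m+1} = 0$); $S(V) = q^m e(-v)$ when $d(V) = n-m-1$, where $v$ is the leading coefficient of $V$ (because then $a_1 = \cdots = a_m = 0$ while $a_{m+1} = v$); and $S(V) = 0$ whenever $d(V) \ge n-m$.

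Next, partition the nonzero $V$ of degree $j$ as $V = \alpha W$ with $\alpha \in \mathbb{F}_q^*$ and $W \in \mathcal{M}_j$. A change of variable $u \mapsto \alpha u$ in the Gauss sum gives $G(\alpha W,\chi_f) = \chi_f(\alpha^{-1}) G(W,\chi_f)$. A short reciprocity calculation shows that $\chi_f$ restricted to $\mathbb{F}_q^*$ is the $d(f)$-th power of the quadratic character on $\mathbb{F}_q$, so it is trivial when $d(f)$ is even and a nontrivial quadratic character when $d(f)$ is odd. Summing $\chi_f(\alpha^{-1})$ over $\alpha \in \mathbb{F}_q^*$ therefore equals $q-1$ in the even case and $0$ in the odd case, producing the $(q-1)\sum_{V \in \mathcal{M}_{\le n-m-2}} G(V,\chi_f)$ term for even $d(f)$ and killing the corresponding degrees for odd $d(f)$. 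For the $d(V) = n-m-1$ contribution, the analogous scalar sum is $\sum_{\alpha \in \mathbb{F}_q^*} \chi_f(\alpha^{-1}) e(-\alpha)$: in the even case this equals $\sum_{\alpha \neq 0} e(-\alpha) = -1$, while in the odd case it equals the classical quadratic Gauss sum over $\mathbb{F}_q$, which is $\sqrt{q}$ under the hypothesis $q \equiv 1 \pmod 4$ (this also ensures $\chi_f(-1) = 1$, so sign issues in the substitution drop out). The isolated $V = 0$ term contributes $G(0,\chi_f)$, which automatically vanishes when $d(f)$ is odd since $\chi_f$ is then nontrivial on $(\mathbb{F}_q[x]/(f))^\times$. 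Assembling the pieces recovers both formulas.

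The main subtlety lies in the combinatorial bookkeeping of the Laurent coefficients in the evaluation of $S(V)$, together with the precise evaluation of the scalar quadratic Gauss sum in the odd-degree case; everything else follows cleanly from orthogonality and the two elementary reductions above.
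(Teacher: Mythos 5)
Your proof is correct: the Fourier inversion on $\mathbb{F}_q[x]/(f)$ against the Gauss sums, the evaluation of $\sum_{g\in\mathcal{M}_m}e(-gV/f)$ by the Laurent coefficients of $V/f$, and the reduction of non-monic $V$ to monic ones via $\chi_f$ on $\mathbb{F}_q^*$ (with the quadratic Gauss sum $\sqrt{q}$ for odd $d(f)$, $q\equiv 1\pmod 4$) assemble exactly into both stated formulas. The paper itself only cites Proposition 3.1 of \cite{aflorea} for this lemma, and your argument is essentially the same standard Poisson-summation proof given there, so there is nothing further to reconcile.
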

\begin{proof}
See Proposition $3.1$ in \cite{aflorea}.
\end{proof}
We also need to compute the generalized Gauss sums. The proof is similar to the proof of Lemma $2.3$ in \cite{sound} and we will skip it.
\begin{lemma}
Let $q$ be a prime with $q \equiv 1 \pmod 4$. 
\begin{enumerate}
\item If $(f,g)=1$, then $G(V, \chi_{fg})= G(V, \chi_f) G(V,\chi_g)$.
\item Suppose $V= V_1 P^{\alpha}$ where $P \nmid V_1$.
Then 
 $$G(V , \chi_{P^i})= 
\begin{cases}
0 & \mbox{if }  i \leq \alpha \text{ and } i \text{ odd} \\
\phi(P^i) & \mbox{if }  i \leq \alpha \text{ and } i \text{ even} \\
-|P|^{i-1} & \mbox{if }  i= \alpha+1 \text{ and } i \text{ even} \\
\left( \frac{V_1}{P} \right) |P|^{i-1} |P|^{1/2} & \mbox{if } i = \alpha+1 \text{ and } i \text{ odd} \\
0 & \mbox{if } i \geq 2+ \alpha .
\end{cases}$$ 
\end{enumerate} \label{computeg}
\end{lemma}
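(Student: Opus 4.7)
The plan is to prove the two parts separately. Part (1) is a standard consequence of the Chinese Remainder Theorem: writing $u \pmod{fg}$ uniquely as $u \equiv u_1 g \bar{g} + u_2 f \bar{f} \pmod{fg}$ with $u_1 \pmod f$, $u_2 \pmod g$ and $\bar{g}$, $\bar{f}$ the respective inverses, the character splits as $\chi_{fg}(u) = \chi_f(u_1) \chi_g(u_2)$, and a partial-fraction decomposition of $uV/(fg)$ causes the additive factor $e(uV/(fg))$ to split likewise, giving the claimed product $G(V,\chi_f) G(V,\chi_g)$.

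For part (2), I would use the $P$-adic expansion $u = u_0 + u_1 P + \cdots + u_{i-1} P^{i-1}$ with each $u_j$ running over polynomials of degree less than $d(P)$. The key observation is that because $q \equiv 1 \pmod 4$, quadratic reciprocity introduces no sign, so $\chi_{P^i}(u) = \bigl(\tfrac{u}{P}\bigr)^{i}$ for $(u,P)=1$ and in particular depends only on $u_0$; summands with $P \mid u$ vanish. A direct computation gives $uV/P^{i} = \sum_{k=0}^{i-1} V_1 u_k P^{k+\alpha-i}$, of which only the terms with $k+\alpha < i$ are proper fractions and contribute to the exponential.

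The three ranges of $i$ now fall out case-by-case. If $i \le \alpha$, every summand of $uV/P^{i}$ is a polynomial, so $e(uV/P^i)=1$ and the sum reduces to $\sum_{(u,P)=1} \chi_{P^i}(u)$, which equals $\phi(P^i)$ when $i$ is even and vanishes when $i$ is odd by orthogonality of $\bigl(\tfrac{\cdot}{P}\bigr)$ over $u_0 \pmod P$. If $i = \alpha + 1$, only $k=0$ contributes, the $u_1, \ldots, u_{i-1}$ sums produce $|P|^{i-1}$, and the remaining sum over $u_0$ is $\sum_{u_0 \not\equiv 0} e(V_1 u_0/P) = -1$ (for $i$ even) or the quadratic Gauss sum $\bigl(\tfrac{V_1}{P}\bigr) |P|^{1/2}$ (for $i$ odd). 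If $i \ge \alpha + 2$, the coefficient $u_{i-\alpha-1}$ has index $\ge 1$, so it does not appear in $\chi_{P^i}(u)$, while it enters the exponent exactly through $V_1 u_{i-\alpha-1}/P$; summing over $u_{i-\alpha-1} \pmod P$ annihilates the sum because $V_1/P$ induces a non-trivial additive character of $\mathbb{F}_q[x]/P$.

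The principal technical ingredient, and really the only one beyond bookkeeping, is the evaluation of the quadratic Gauss sum $\sum_{u_0 \pmod P} \bigl(\tfrac{u_0}{P}\bigr) e(V_1 u_0/P) = \bigl(\tfrac{V_1}{P}\bigr) |P|^{1/2}$ appearing in the $i = \alpha+1$, $i$ odd case. This is where the hypothesis $q \equiv 1 \pmod 4$ is essential: under this congruence the Gauss sum for the quadratic character modulo $P$ is exactly $+|P|^{1/2}$ with no extra sign, whereas for $q \equiv 3 \pmod 4$ an additional factor of $i^{d(P)}$ would appear and the stated formula would need adjustment.
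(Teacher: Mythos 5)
Your argument is correct in outline and is essentially the intended one: the paper itself gives no proof of this lemma, deferring to Soundararajan's Lemma 2.3, and your sketch is the function-field adaptation of that computation. In particular the case analysis in part (2) is exactly right: all fractional parts vanish when $i\le\alpha$, only the $u_0$-term survives when $i=\alpha+1$ (giving $-|P|^{i-1}$ or the quadratic Gauss sum), and when $i\ge\alpha+2$ the coordinate $u_{i-\alpha-1}$ appears in the exponential but not in $\chi_{P^i}(u)$, so summing over it kills the sum.

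One point in part (1) is glossed over, and it is precisely where the hypothesis $q\equiv 1\pmod 4$ enters that part. With your substitution $u\equiv u_1g\bar g+u_2f\bar f \pmod{fg}$, the exponential does not split ``likewise'': one has $uV/(fg)\equiv u_1\bar gV/f+u_2\bar fV/g$ up to a polynomial, so the two inner sums are $G(\bar gV,\chi_f)$ and $G(\bar fV,\chi_g)$, and you obtain $\chi_f(g)\chi_g(f)\,G(V,\chi_f)G(V,\chi_g)$ rather than the claimed product directly. The twist $\chi_f(g)\chi_g(f)$ equals $1$ only by Artin's reciprocity law combined with $q\equiv 1\pmod 4$ (this is exactly why, in the number-field setting, Soundararajan must work with corrected Gauss sums carrying an extra factor to make them multiplicative). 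So the congruence hypothesis is needed for part (1) as well, not only for the sign of the Gauss sum in the $i=\alpha+1$, $i$ odd case --- which, incidentally, you assert rather than prove: the evaluation $\sum_{u_0 \bmod P}\bigl(\tfrac{u_0}{P}\bigr)e(u_0V_1/P)=\bigl(\tfrac{V_1}{P}\bigr)|P|^{1/2}$ is a genuine input (a Hasse--Davenport type computation, or a citation to the standard function-field Gauss sum evaluation). With these two points supplied, your proof is complete.
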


 \subsection{Outline of the proof}
We will use the functional equation for $L(1/2,\chi_D)^k$ (with $k=2,3$) as given in Lemma \ref{fe}, and then Lemma \ref{firstpoint} to transform the sum over square-free polynomials $D$ into sums involving monic polynomials. We'll use the Poisson summation formula as in Lemma \ref{poissonmonic} for these sums, getting another summation over monic polynomials $V$. 

We will first focus on the second moment of $L(1/2,\chi_D)$. There will be a main term of size $q^{2g+1} (2g+1)^3$ coming from the contribution of square polynomials $f$ in the functional equation, which we will evaluate in section \ref{main}. Unlike the case of the mean value of $L(1/2,\chi_D)$ in the hyperelliptic ensemble, there will be another secondary main term, which will come from the contribution of square polynomials $V$ (where $V$ is the dual variable in the Poisson summation formula). The secondary main term is of size $q^{2g+1}(2g+1)$, and we will explicitly compute it in section \ref{secondary}. We note that computing the secondary main term is the most delicate part of the proof, and it reduces to exactly evaluating a certain contour integral, which can be done by using a functional equation of the integrand.

We will then evaluate the sum over non-square polynomials $V$ in section \ref{error} and show that it is bounded by $q^{g(1+\epsilon)}$. In section \ref{last} we put together the main term and the secondary main term and check that our answer agrees with the conjecture \eqref{conjpol}.

We use the same methods to evaluate the third moment in section \ref{thirdmom}. Since most of the computations are very similar to the ones carried out before, we will only briefly sketch the proof. The main term corresponding to square polynomials $f$ is of size $q^{2g+1} (2g+1)^6$, and the secondary main term, coming from square $V$, is also of size $q^{2g+1}(2g+1)^6$. We note that evaluating the secondary main term for the third moment again reduces to computing a certain contour integral, which is easier to do than in the second moment case. Here, by simply shifting contours, we get a main term and an error of size $q^{3g/2(1+\epsilon)}$. Bounding the contribution from non-square polynomials $V$ is similar to the method used for the second moment.
 \subsection{Setup of the problem} 
\label{setup} In what follows, $k=2,3$.
Using the functional equation in Lemma \ref{fe} and Lemma \ref{firstpoint} it follows that 
$$\sum_{D \in \mathcal{H}_{2g+1}} L \left( \frac{1}{2}, \chi_D \right)^k = S_{kg}+S_{kg-1},$$ where
$$S_{kg}= \sum_{f \in \mathcal{M}_{\leq kg}} \frac{d_k(f)}{\sqrt{|f|}} \sum_{\substack{ C \in \mathcal{M}_{\leq g} \\ C | f^{\infty}}} \sum_{h \in \mathcal{M}_{2g+1-2d(C)}} \chi_f(h) - q  \sum_{f \in \mathcal{M}_{\leq kg}} \frac{d_k(f)}{\sqrt{|f|}}  \sum_{\substack{ C \in \mathcal{M}_{\leq g-1} \\ C | f^{\infty}}} \sum_{h \in \mathcal{M}_{2g-1-2d(C)}} \chi_f(h). $$ 
Similarly as in \cite{aflorea}, the term in the expression for $S_{kg}$ corresponding to $C \in \mathcal{M}_g$ is bounded by $O(q^{kg/2(1+\epsilon)}).$ Then we rewrite
$$S_{kg} = \sum_{f \in \mathcal{M}_{\leq kg}} \frac{d_k(f)}{\sqrt{|f|}} \sum_{\substack{ C \in \mathcal{M}_{\leq g-1} \\ C | f^{\infty}}} \left(  \sum_{h \in \mathcal{M}_{2g+1-2d(C)}} \chi_f(h) - q\sum_{h \in \mathcal{M}_{2g-1-2d(C)}} \chi_f(h) \right) +O(q^{kg/2(1+\epsilon)}).$$
A similar expression holds for $S_{kg-1}$. We'll focus on $S_{kg}$. Write $S_{kg} = S_{kg,\text{e}} + S_{kg, \text{o}}+O(q^{kg/2(1+\epsilon)})$, where $S_{kg,\text{e}}$ is the sum over polynomials $f$ of even degree less than or equal to $kg$, and $S_{kg,\text{o}}$ the sum over polynomials $f$ of odd degree. When summing over polynomials of even degree, we use the Poisson summation formula in Lemma \ref{poissonmonic} for the sum over $h$, and let $M_{kg}$ be the term corresponding to $V=0$. Note that using Lemma \ref{computeg}, $G(0,\chi_f)$ is nonzero if and only if $f$ is a square, in which case $G(0,\chi_f) = \phi(f)$.  Write $S_{kg, \text{e}} = M_{kg} + S_{kg,\text{e}}(V \neq 0)$, where 
\begin{equation*}
  M_{kg} = q^{2g+1}  \Big( 1-\frac{1}{q} \Big) \sum_{\substack{f \in \mathcal{M}_{\leq kg} \\ f = \square}} \frac{d_k(f)}{|f|^{\frac{3}{2}}}  \phi(f)\sum_{\substack{C \in \mathcal{M}_{\leq g-1} \\ C | f^{\infty}}} \frac{1}{|C|^2} ,
 \end{equation*}
  \begin{align} S_{kg,\text{e}}(V \neq 0) &= q^{2g+1} \sum_{\substack{f \in \mathcal{M}_{\leq kg} \\ d(f) \text{ even}}} \frac{d_k(f)}{|f|^{\frac{3}{2}}}\sum_{\substack{C \in \mathcal{M}_{\leq g-1} \\ C | f^{\infty}}} \frac{1}{|C|^2} \bigg[ (q-1) \sum_{ V \in \mathcal{M}_{\leq d(f)-2g-3+2d(C)} } G(V, \chi_f) - \sum_{V \in \mathcal{M}_{d(f)-2g-2+2d(C)}} G(V,\chi_f) \nonumber \\
 & - \frac{q-1}{q} \sum_{V \in \mathcal{M}_{\leq d(f)-2g-1+2d(C)} } G(V, \chi_f) + \frac{1}{q} \sum_{V \in \mathcal{M}_{d(f)-2g+2d(C)}} G(V,\chi_f) \bigg] .\label{s1e} \end{align}
 Again using the Poisson summation formula in \ref{poissonmonic},
 \begin{equation}
 S_{kg,\text{o}} = q^{2g+1}  \sqrt{q} \sum_{\substack{f \in \mathcal{M}_{\leq kg} \\ d(f) \text{ odd}}} \frac{d_k(f)}{|f|^{\frac{3}{2}}} \sum_{\substack{C \in \mathcal{M}_{\leq g-1} \\ C | f^{\infty}}} \frac{1}{|C|^2}  \left( \sum_{V \in \mathcal{M}_{d(f)-2g-2+2d(C)}} G(V, \chi_f)  - \frac{1}{q} \sum_{V \in \mathcal{M}_{d(f)-2g+2d(C)}} G(V, \chi_f) \right). \label{odd} \end{equation}
  
  In equation \eqref{s1e}, we write the sum over $V$ as the sum over square $V$ plus the sum over non-square $V$. Let $S_{kg,\text{e}}(V \neq 0)=S_{kg}(V=\square) + S_{kg,\text{e}}(V \neq \square)$.
  
  When $V$ is a square, write $V=l^2$. Using equation \eqref{s1e}, we rewrite
 \begin{align}
  S_{kg}(V= \square) &= q^{2g+1}\sum_{\substack{f \in \mathcal{M}_{\leq kg} \\ d(f) \text{ even}}} \frac{d_k(f)}{|f|^{\frac{3}{2}}}\sum_{\substack{C \in \mathcal{M}_{\leq g-1} \\ C | f^{\infty}}} |C|^{-2} \bigg[ (q-1)  \sum_{ l \in \mathcal{M}_{\leq \frac{d(f)}{2}-g-2+d(C)} } G(l^2, \chi_f) - \sum_{l \in \mathcal{M}_{\frac{d(f)}{2}-g-1+d(C)}} G(l^2,\chi_f) \nonumber \\
  &- \frac{q-1}{q}  \sum_{ l \in \mathcal{M}_{\leq \frac{d(f)}{2}-g-1+d(C)} } G(l^2, \chi_f) + \frac{1}{q} \sum_{l \in \mathcal{M}_{\frac{d(f)}{2}-g+d(C)}} G(l^2,\chi_f) \bigg] .\label{s1s} \end{align}   Let $S_k(V= \square) =S_{kg}(V=\square)+ S_{kg-1}(V=\square)$ (where $S_{kg-1}(V=\square)$ is defined in the same way as $S_{kg}(V=\square)$). We'll show that $S_k(V=\square)$ (which is the secondary main term) is of size $g q^{2g+1}$ when $k=2$ and of size $g^6 q^{2g+1}$ when $k=3$.  Define $S_{kg}(V \neq \square) = S_{kg,\text{o}} + S_{kg,\text{e}}(V \neq \square)$, with $ S_{kg,\text{o}}$ given by \eqref{odd} and $S_{kg,\text{e}}(V \neq \square)$ the sum over non-square polynomials $V$ in \eqref{s1e}. Similarly define $S_{kg-1}(V \neq \square)$. We'll bound $S_{kg}(V \neq \square)$ and $S_{kg-1}(V \neq \square)$ by $O(q^{kg/2(1+\epsilon)})$.
\label{setup}
\section{Main term} 
\label{main}
In the next four sections, we will concentrate on the second moment of $L(1/2,\chi_D)$. Here, we focus on evaluating the main term corresponding to the contribution of square polynomials $f$. 
Recall that
\begin{equation}    M_{2g} = q^{2g+1}  \Big( 1-\frac{1}{q} \Big) \sum_{\substack{f \in \mathcal{M}_{\leq 2g} \\ f = \square}} \frac{d_2(f)}{|f|^{\frac{3}{2}}}  \phi(f)\sum_{\substack{C \in \mathcal{M}_{\leq g-1} \\ C | f^{\infty}}} \frac{1}{|C|^2}. \label{principal} \end{equation} A similar expression holds for $M_{2g-1}$. 
The main term $M_{2g}+M_{2g-1}$ is given by the following lemma.
\begin{lemma}
Using the same notation as before, we have
$$M_{2g}+M_{2g-1} = \frac{q^{2g+1}}{\zeta(2)} P_1(2g+1) + O(q^{g(1+\epsilon)}),$$ where $P_1$ is the polynomial of degree $3$ given by \eqref{p1x}.
\label{lemma:lemmamainterm}
\end{lemma}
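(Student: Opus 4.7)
The plan is to convert the sum over squares $f=l^2$ into a coefficient-extraction problem for a multiplicative generating series with a triple pole at $u=1$, and to evaluate this via a Perron-type contour integral.

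First, I would parametrize $f=l^2$ with $l \in \mathcal{M}_{\leq g}$. Using $d_2(l^2) = \prod_{P^b\|l}(2b+1)$, $\phi(l^2) = |l|^2 \prod_{P\mid l}(1-|P|^{-1})$, and $1-1/q = 1/\zeta(2)$, equation \eqref{principal} becomes
\[
M_{2g} = \frac{q^{2g+1}}{\zeta(2)} \sum_{l \in \mathcal{M}_{\leq g}} \frac{d_2(l^2)}{|l|}\prod_{P\mid l}\Big(1-\frac{1}{|P|}\Big) \sum_{\substack{C\mid l^{\infty} \\ d(C) \leq g-1}} \frac{1}{|C|^2}.
\]
I would then replace the truncated inner $C$-sum by its Euler completion $\prod_{P\mid l}(1-|P|^{-2})^{-1}$; the tail at $d(C) \geq g$ contributes only to the error after multiplication by the prefactor. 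The residual multiplicative coefficient
\[
a(l) := \frac{d_2(l^2)}{|l|}\prod_{P\mid l}\frac{|P|}{|P|+1}, \qquad a(P^b) = \frac{(2b+1)|P|}{(|P|+1)|P|^b} \quad (b \geq 1),
\]
has generating series $\mathcal{A}(u) := \sum_l a(l)\,u^{d(l)} = \prod_P E_P(u)$, where, setting $y = u^{d(P)}/|P|$, one computes $E_P(u) = [|P|(1+y)+(1-y)^2]/[(|P|+1)(1-y)^2]$. Using $\prod_P(1-y)^{-1} = (1-u)^{-1}$ and $\prod_P(1+y) = (1-u^2/q)/(1-u)$, this factors as
\[
\mathcal{A}(u) = \frac{1-u^2/q}{(1-u)^3}\,H(u),
\]
where $H(u)$ is a residual Euler product analytic in $|u| < q$.

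Next I would apply the Perron-type formula
\[
\sum_{l \in \mathcal{M}_{\leq g}} a(l) = \frac{1}{2\pi i}\oint_{|u|=r}\frac{\mathcal{A}(u)}{(1-u)u^{g+1}}\,du, \qquad r<1,
\]
and deform the contour outward to $|u| = q^{1-\epsilon}$. Crossing the order-$4$ pole at $u=1$ and expanding $(1-u^2/q)H(u)u^{-g-1}$ in Taylor series about $u=1$ gives a polynomial in $g$ of degree $3$, with leading coefficient $(1-1/q)H(1)(g+1)(g+2)(g+3)/6$. The shifted contour integral is $O(q^{-g(1-\epsilon)})$; after multiplication by $q^{2g+1}/\zeta(2)$ this is exactly the announced error $O(q^{g(1+\epsilon)})$. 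The identical argument with $l \in \mathcal{M}_{\leq g-1}$ handles $M_{2g-1}$, and adding the two produces a single degree-$3$ polynomial in $2g+1$, to be identified with $P_1(2g+1)$.

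The main obstacle is to express $H(1)$ and its first three derivatives at $u=1$ in terms of $A(1/2;0,0)$ and its partial derivatives from \eqref{az}. This is a local (prime-by-prime) Euler-product identity, algebraically tedious but conceptually routine, and is what is needed to match the polynomial above with the explicit form of $P_1(x)$ in \eqref{p1x} (whose eventual assembly into the conjectural $R(x)$ of \eqref{conjpol} is deferred until after the secondary main term is added in Section \ref{last}).
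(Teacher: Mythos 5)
Your proposal is correct and follows essentially the same route as the paper: complete the $C$-sum over $C\mid l^{\infty}$, factor the generating series as zeta-factors times an absolutely convergent Euler product, apply the Perron-type formula, and pick up the order-$4$ pole while bounding the shifted contour (your pole at $u=1$ is the paper's pole at $u=1/q$ after the rescaling $u\mapsto qu$, and your $H(u)$ is exactly the paper's $\mathcal{B}(u/q)$). The only small correction is to your last paragraph: since $H(u)=\mathcal{B}(u/q)$, identifying the residue polynomial with $P_1$ in \eqref{p1x} is a chain-rule substitution, while the comparison with $A(1/2;0,0)$ and its derivatives belongs to the later verification of the conjecture \eqref{conjpol}, not to this lemma.
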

To prove this, we express $M_{2g}$ and $M_{2g-1}$ as contour integrals and then evaluate them. We do so in the next lemma.
\begin{lemma}
With the same notation as before, we have
$$M_{2g} = \frac{q^{2g+1}}{\zeta(2)} \frac{1}{2 \pi i} \oint_{|u|=r_1} \frac{(1-qu^2) \mathcal{B}(u)}{(1-qu)^4  (qu)^g} \, \frac{du}{u} + O(q^{g \epsilon}),$$ and
$$ M_{2g-1} = \frac{q^{2g+1}}{\zeta(2)} \frac{1}{2 \pi i} \oint_{|u|=r_1} \frac{(1-qu^2) \mathcal{B}(u)}{(1-qu)^4  (qu)^{g-1}} \, \frac{du}{u} + O(q^{g \epsilon}), $$ where \begin{equation}
\mathcal{B}(u) = \prod_P \left( 1+ \frac{ u^{d(P)}(u^{d(P)}-3)}{(|P|+1)(1+u^{d(P)})} \right), \label{bu} 
\end{equation} and $r_1<1/q$. \label{lemma:maintermlemma}
\end{lemma}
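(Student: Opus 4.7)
The plan is to approximate $M_{2g}$ by dropping the constraint $d(C)\leq g-1$ on the inner sum, identify the resulting truncated Dirichlet series with the stated contour integral via an Euler product identity, and control the discarded tail by shifting a contour.

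First, substituting $f=l^2$ with $l$ monic of degree $\leq g$ in \eqref{principal} and using $\phi(l^2)=|l|\phi(l)$, one gets
$$M_{2g} \;=\; \frac{q^{2g+1}}{\zeta(2)}\sum_{l\in\mathcal{M}_{\leq g}}\frac{d_2(l^2)\phi(l)}{|l|^2}\sum_{\substack{C\mid l^\infty\\ d(C)\leq g-1}}\frac{1}{|C|^2}.$$
Let $\widetilde M_{2g}$ denote the analogous quantity with the inner constraint removed, so that the inner sum is $\prod_{P|l}(1-|P|^{-2})^{-1}$, and put
$$\mathcal{A}(v):=\sum_{l}\frac{d_2(l^2)\phi(l)}{|l|^2}\prod_{P|l}(1-|P|^{-2})^{-1}\,v^{d(l)}.$$
A direct computation of the local factor yields
$$\mathcal{A}_P(v)\;=\;\frac{x^2+(|P|-2)x+(|P|+1)}{(|P|+1)(1-x)^2},\qquad x:=v^{d(P)}/|P|,$$
while the Euler product definition of $\mathcal{B}$ gives $\mathcal{B}_P(v/q)=\frac{x^2+(|P|-2)x+(|P|+1)}{(|P|+1)(1+x)}$. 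Combining this with the elementary identities $\prod_P(1-v^{d(P)}/|P|)=1-v$ and $\prod_P(1-v^{2d(P)}/|P|^2)=1-v^2/q$, one obtains the key Euler product identity
$$\mathcal{A}(v)\;=\;\frac{(1-v^2/q)\,\mathcal{B}(v/q)}{(1-v)^3}.$$

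Second, I would apply Perron's formula $\sum_{n\leq g}[v^n]\mathcal{A}(v)=\frac{1}{2\pi i}\oint_{|v|=\rho}\mathcal{A}(v) v^{-g-1}(1-v)^{-1}\,dv$ with $\rho<1$ and make the substitution $v=qu$ (so $|u|=\rho/q<1/q$). Using the identity above, the integrand transforms into $(1-qu^2)\mathcal{B}(u)/((1-qu)^4(qu)^g u)$, which yields the claimed formula for $\widetilde M_{2g}$.

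Third, I would bound the discrepancy $\widetilde M_{2g}-M_{2g}$. For each $l$, Perron applied to $\sum_{d(C)\leq g-1}|C|^{-2}$, followed by a contour shift past the simple pole at $w=1$, gives
$$\sum_{\substack{C|l^\infty\\ d(C)\geq g}}\frac{1}{|C|^2}\;=\;-\frac{1}{2\pi i}\oint_{|w|=R}\frac{\mathcal{C}_l(w)}{w^g(1-w)}\,dw,$$
where $\mathcal{C}_l(w):=\prod_{P|l}(1-w^{d(P)}/|P|^2)^{-1}$ is analytic in $|w|<q^2$. Taking $R=q^{2-\epsilon}$, one has the trivial bound $|\mathcal{C}_l(w)|\leq\prod_{P|l}(1-|P|^{-\epsilon})^{-1}$ on $|w|=R$, giving a pointwise estimate $O(q^{-(2-\epsilon)g}\prod_{P|l}(1-|P|^{-\epsilon})^{-1})$. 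Summing against $d_2(l^2)\phi(l)/|l|^2$ over $l\in\mathcal{M}_{\leq g}$, the associated Dirichlet series in $v$ has only a pole of order three at $v=1$ (by an Euler-product computation parallel to the one above), so Perron gives a polynomial bound $O(g^3)$. Multiplying by the prefactor $q^{2g+1}\cdot q^{-(2-\epsilon)g}=q^{g\epsilon+1}$ and absorbing the polynomial factor yields $\widetilde M_{2g}-M_{2g}=O(q^{g\epsilon})$. The argument for $M_{2g-1}$ is identical, with $\mathcal{M}_{\leq g}$ replaced by $\mathcal{M}_{\leq g-1}$, which produces the $(qu)^{g-1}$ in the integrand. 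The main obstacle is the Euler product identification of $\mathcal{A}(v)$ with $(1-v^2/q)\mathcal{B}(v/q)/(1-v)^3$; once that is in hand, the Perron inversion and tail bound are routine.
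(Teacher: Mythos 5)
Your proposal is correct and follows essentially the same route as the paper: substitute $f=l^2$, complete the $C$-sum to the full Euler product $\prod_{P|l}(1-|P|^{-2})^{-1}$ with a negligible tail, identify the generating function (your identity $\mathcal{A}(v)=(1-v^2/q)\mathcal{B}(v/q)/(1-v)^3$ is exactly the paper's $\mathcal{A}(u)=\mathcal{Z}(u)^3\mathcal{B}(u)/\mathcal{Z}(u^2)$ after $v=qu$), and apply Perron's formula. The only difference is cosmetic: you prove the truncation error bound directly by a contour shift, where the paper quotes the analogous estimate from its reference.
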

\begin{remark}
Note that $\mathcal{B}(u)$ converges for $|u|<1$.
\end{remark}
\begin{proof}
In equation \eqref{principal}, write $f=l^2$, with $l \in \mathcal{M}_m$. Note that $C | f^{\infty}$ if and only if $C | l^{\infty}$. Similarly as in \cite{aflorea}, we have
$$\sum_{\substack{C \in \mathcal{M}_{\leq g-1} \\ C | f^{\infty}}} |C|^{-2} =\sum_{\substack{C \in \mathcal{M}_{\leq g-1} \\ C | l^{\infty}}} |C|^{-2}= \prod_{P|l} \left( 1- \frac{1}{|P|^2} \right)^{-1} + O(q^{-g(2-\epsilon)}).$$  Since $\phi(l^2)/|l|^2 = \prod_{P|l} (1-|P|^{-1})$ and $(1-1/q)^{-1} = \zeta(2)$, we have
\begin{equation}
M_{2g} = \frac{q^{2g+1}}{\zeta(2)} \sum_{l \in \mathcal{M}_{\leq g}} \frac{d_2(l^2)}{|l| \displaystyle \prod_{P|l} (1+|P|^{-1})} + O(q^{g \epsilon}) .\label{interm1} \end{equation}
Let
$$ \mathcal{A}(u) = \sum_{l \in \mathcal{M}}  \frac{d_2(l^2)}{\displaystyle \prod_{P|l} (1+|P|^{-1})} u^{d(l)}.$$ By multiplicativity, we can write
$$\mathcal{A}(u)= \frac{\mathcal{Z}(u)^3}{\mathcal{Z}(u^2)} \mathcal{B}(u),$$ with $\mathcal{B}(u)$ given by \eqref{bu}. Note that $\mathcal{Z}(u)=(1-qu)^{-1}$ and $\mathcal{Z}(u^2)=(1-qu^2)^{-1}$, so
\begin{equation}
\mathcal{A}(u)= \frac{1-qu^2}{(1-qu)^3} \mathcal{B}(u). \label{au}
\end{equation}
Now we will use the following remark, which is the function field analogue of Perron's formula. If the power series $\sum_{f \in \mathcal{M}} a(f) u^{d(f)}$ converges absolutely for $|u| \leq R < 1$, then
\begin{equation}
\sum_{f \in \mathcal{M}_{\leq k}} a(f) = \frac{1}{2 \pi i} \oint_{|u|=R} \left( \sum_{f \in \mathcal{M}} a(f) u^{d(f)} \right) \frac{u^{-k}}{1-u} \, \frac{du}{u}. \label{perron} \end{equation} Using this in \eqref{interm1} gives

$$ M_{2g} =  \frac{q^{2g+1}}{\zeta(2)} \frac{1}{2 \pi i} \oint_{|u|=r_1} \frac{(1-qu^2) \mathcal{B}(u)}{(1-qu)^4  (qu)^g} \, \frac{du}{u} + O(q^{g \epsilon}),$$ where $r_1<1/q$. 
We can similarly express $M_{2g-1}$, which finishes the proof of Lemma \ref{lemma:maintermlemma}. 
\end{proof}
\begin{proof}[Proof of Lemma \ref{lemma:lemmamainterm}]
In Lemma \ref{lemma:maintermlemma}, note that the integrand $((1-qu^2) \mathcal{B}(u))/(u(1-qu)^4 (qu)^g)$ has a pole of order $4$ at $u=1/q$. Since $\mathcal{B}(u)$ converges absolutely for $|u|<1$, we can write
$$ \frac{1}{2 \pi i} \oint_{|u|=r_1} \frac{(1-qu^2) \mathcal{B}(u)}{(1-qu)^4  (qu)^g} \, \frac{du}{u} = - \text{Res}(u=1/q) + \frac{1}{2 \pi i} \oint_{|u|=r_2} \frac{(1-qu^2) \mathcal{B}(u)}{(1-qu)^4  (qu)^g} \, \frac{du}{u},$$ where $r_2=q^{- \epsilon}$. We can explicitly compute the residue at $u=1/q$, and we can bound the integral on the right-hand side above by
$$ \left| \frac{1}{2 \pi i} \oint_{|u|=r_2} \frac{(1-qu^2) \mathcal{B}(u)}{(1-qu)^4 u (qu)^g} \, du \right| \ll q^{-g(1-\epsilon)}.$$ We can similarly express $M_{2g-1}$ in terms of the residue of the integrand $ ((1-qu^2) \mathcal{B}(u))/(u(1-qu)^4 (qu)^{g-1})$ at $u=1/q$. Computing the residues at $u=1/q$ gives
\begin{equation} M_{2g}+M_{2g-1} = \frac{q^{2g+1}}{\zeta(2)} P_1(2g+1) + O(q^{g(1+\epsilon)}), \label{firstterm} \end{equation}  where $P_1(x)$ is a polynomial of degree $3$. We compute it explicitly as 
\begin{align}
P_1(x) &= x^3  \frac{\bc(1/q) (1-q^{-1}) }{24} + x^2 \left[ \frac{\bc(1/q) (1+q^{-1})}{4} - \frac{\bc'(1/q) (1-q^{-1})}{4q}  \right] \nonumber \\
&+ x \left[ \frac{11 \bc(1/q) (1-q^{-1})}{24} + \frac{3 \bc'(1/q)(1-q^{-1})}{2q}  - \frac{2 \bc'(1/q)}{q} + \frac{ \bc''(1/q) (1-q^{-1})}{2q^2}  \right] \nonumber \\
&+ \frac{ \bc(1/q) (1+q^{-1})}{4} - \frac{ \bc' (1/q) (1-q^{-1})}{4q} + \frac{ 2 \bc'(1/q)}{q^2} +\frac{2 \bc''(1/q)}{q^3} - \frac{ \bc^{(3)} (1/q) (1-q^{-1})}{3q^3}.  \label{p1x} \end{align}
\end{proof}

\section{Secondary main term}
\label{secondary}
In this section, we will evaluate the secondary main term $S_2(V=\square)$ coming from the contribution of square polynomials $V$. Recall from subsection \ref{setup} that $S_2(V=\square)= \sg+ \sgm$, where $\sg$ is given by equation \eqref{s1s}.
We will prove the following.
\begin{lemma}
Using the same notation as before, we have that
$$S_2(V=\square) = \frac{q^{2g+1}}{\zeta(2)} P_2(2g+1)+O(q^{g(1+\epsilon)}), $$ where $P_2(x)$ is a linear polynomial which can be computed explicitly (see formula \eqref{p2x}.)
\label{lemma:sec} 
\end{lemma}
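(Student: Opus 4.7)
The plan is to reduce $S_2(V=\square)=\sg+\sgm$ to a double contour integral in the spirit of Lemma \ref{lemma:maintermlemma} and then extract the linear-in-$g$ contribution from a residue, the novelty being that exact evaluation of the resulting integral requires invoking a functional equation of the integrand. The four bracketed sums in \eqref{s1s}, together with their odd-degree analogs arising in $\sgm$, differ only in the upper bound of the $l$-summation and by elementary scalar factors, so they will all assemble into a single integrand once Perron's formula \eqref{perron} is applied in both $f$ and $l$.

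The first step is to use Lemma \ref{computeg} to turn $G(l^2,\chi_f)$ into an explicit multiplicative function of $f$. Since $V=l^2$ is a square, the local Gauss sum $G(l^2,\chi_{P^e})$ with $\alpha=\nu_P(l)$ equals $\phi(P^e)$ for $e\le 2\alpha$ even, equals $|P|^{2\alpha+1/2}$ for $e=2\alpha+1$ (the relevant Jacobi symbol is $1$ because $l^2/P^{2\alpha}$ is a nonzero square mod $P$), equals $-|P|^{2\alpha+1}$ for $e=2\alpha+2$, and vanishes otherwise. Feeding this into \eqref{s1s} and absorbing the sum $\sum_{C\mid f^\infty}|C|^{-2}$ into the local factors at primes dividing $f$, exactly as in the proof of Lemma \ref{lemma:maintermlemma} (at the cost of an $O(q^{g\epsilon})$ truncation error), the generating series in variables $u,w$ tracking $d(f)$ and $d(l)$ factors over primes and takes the shape of a product of $\mathcal{Z}$-factors times an Euler product $\mathcal{F}(u,w)$ converging absolutely in a polydisc $|u|,|w|<q^{-\delta}$.

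Next, I would apply Perron \eqref{perron} separately in $f$ and $l$ and combine the four bracketed sums in \eqref{s1s} (plus the odd-degree terms from $\sgm$) into a single double contour integral on $|u|=|w|=r<q^{-1/2}$, carrying an extra Perron factor $(qu)^{-g}$ from the $f$-side and a matching $w$-dependent factor from the $l$-side. Identifying the pole of one of the $\mathcal{Z}$-factors that, together with the Perron denominators, becomes a multiple pole of the combined integrand, and extracting its residue, produces the expected main term of size $\frac{q^{2g+1}}{\zeta(2)} P_2(2g+1)$ with $P_2$ linear; the precise coefficients in \eqref{p2x} drop out of the derivative computation at the residue.

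The hard part, and the reason subsection \ref{setup} flags the secondary main term as the most delicate step of the whole argument, is the shifted contour. A direct estimate on a contour of radius $q^{-\epsilon}$ is off by a polynomial power of $q^g$. The integrand instead satisfies a functional equation in the $w$-variable, manifestly visible on the $\mathcal{Z}$-factors via $w\mapsto 1/(qw)$ and inherited by $\mathcal{F}(u,w)$ from the local symmetry $e\leftrightarrow 2\alpha+2-e$ in the Gauss-sum formula above. Applying this functional equation on the outward-shifted $w$-contour trades the decaying factor $(qu)^{-g}$ for a growing one, which is then absorbed by pushing the $u$-contour to a smaller radius; carrying out this contour manipulation cleanly is where the real work lies, and once it delivers the advertised $O(q^{g(1+\epsilon)})$ bound the residue extraction of the previous paragraph produces the explicit linear polynomial $P_2$ recorded in \eqref{p2x}.
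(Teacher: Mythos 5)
Your outline goes wrong in two places, one computational and one structural, and the second one is exactly where the content of Lemma \ref{lemma:sec} lives. First, the local input: for $V=l^2$ and $\alpha=\nu_P(l)$ we have $\nu_P(V)=2\alpha$ even, so in Lemma \ref{computeg} the case ``$-|P|^{i-1}$ if $i=\alpha+1$ and $i$ even'' can never occur; the nonzero values are $\phi(P^i)$ for even $i\le 2\alpha$ and $|P|^{2\alpha+1/2}$ at $i=2\alpha+1$, while $G(l^2,\chi_{P^i})=0$ for all $i\ge 2\alpha+2$. Your value $-|P|^{2\alpha+1}$ at $e=2\alpha+2$ is therefore wrong, and the ``local symmetry $e\leftrightarrow 2\alpha+2-e$'' that you invoke to generate a functional equation in the variable tracking $d(f)$ does not exist. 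Beyond corrupting the Euler product (and hence the coefficients of $P_2$), this removes the justification for the key analytic step you propose.

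Second, the mechanism: the functional equation is not used to rescue an error term of size a power of $q^g$, and it is not a $w\mapsto 1/(qw)$ symmetry in the $f$-variable. After applying \eqref{perron} in both $f$ and $l$ and using Lemmas \ref{mv} and \ref{rz}, one arrives at a double integral with integrand essentially $\mathcal{F}(z,w)\big((1-z)(1-qw)^2(1-q^2w^2z)^2w^{2g+1}\big)^{-1}$; the $O(q^{g(1+\epsilon)})$ bound comes from routine contour moves ($|z|=q^{\epsilon-1}$, $|w|=q^{-1/2-\epsilon}$), and the double pole at $w=1/q$ yields the linear-in-$g$ factor together with the logarithmic-derivative term $\alpha(z)$. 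What is left is a single $z$-integral over $|z|=q^{\epsilon-1}$ of $\mathcal{F}(z)(1+z)(1-z)^{-3}$ times $\big(2g+1-\tfrac{6z}{1-z^2}-\tfrac{2z^2}{1-z^2}-\alpha(z)\big)$, and this is itself of size comparable to the main term, so it must be evaluated \emph{exactly}; it cannot be computed by shrinking the contour and collecting residues, because $\mathcal{F}(z)$ and $\alpha(z)$ are only defined for $1/q<|z|<q$ (their local factors contain $z^{-d(P)}$). The paper's resolution is the symmetry in the $l$-variable: $\mathcal{F}(z)=\mathcal{F}(1/z)$ and $\alpha(1/z)=\alpha(z)-\tfrac{2(1+z)}{1-z}-4z\tfrac{\mathcal{F}'(z)}{\mathcal{F}(z)}$ (Lemma \ref{sim}), which, after the substitution $z\mapsto 1/z$ and an integration-by-parts identity, shows the integral equals $-\tfrac12\,\mathrm{Res}_{z=1}$, giving \eqref{p2x} with its values $\mathcal{F}(1),\alpha(1),\alpha'(1),\alpha''(1)$. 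Your proposal supplies no substitute for this step: asserting that a $w$-functional equation ``trades the decaying factor $(qu)^{-g}$ for a growing one'' addresses a difficulty that is not present at that stage, and the claim that the coefficients of \eqref{p2x} simply ``drop out of the derivative computation at the residue'' skips the part of the argument where all the work is.
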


\subsection{A few lemmas} 
\label{lemmas}
To prove Lemma \ref{lemma:sec}, we will first prove the following results.
\begin{lemma}
Let $V$ be a monic polynomial in $\fqx$. For $|z|>1/q^2$, let $$\mathcal{M}(V;z,w)= \sum_{f \in \mathcal{M}} \frac{d_2(f) G(V,\chi_f) }{\sqrt{|f|} \displaystyle \prod_{P|f} \left(1-\frac{1}{|P|^2 z^{d(P)}} \right)} w^{d(f)}.$$ 
\begin{enumerate}[(a)]
\item
We have $$ \mathcal{M}(V;z,w) = \mathcal{L}(w,\chi_V)^2 \prod_{P} \mathcal{M}_P(V;z,w),$$ where
$$ \mathcal{M}_P(V;w,u) = 
\begin{cases} 
1+ \frac{2 \left( \frac{V}{P} \right) w^{d(P)}}{|P|^2 z^{d(P)} -1} + w^{2d(P)} - \frac{4w^{2 d(P)}}{1-\frac{1}{|P|^2 z^{d(P)}}} + \frac{2 \left( \frac{V}{P} \right) w^{3 d(P)}}{1-\frac{1}{|P|^2 z^{d(P)}}} & \mbox{if } P \nmid V \\
1+ \left(1-\frac{1}{|P|^2 z^{d(P)}}\right)^{-1} \sum_{b=1}^{\infty} \frac{d_2(P^b) G(V, \chi_{P^b})}{|P|^{b/2}} w^{b d(P)} & \mbox{if } P|V
\end{cases} $$
\item
If $V=l^2$, with $l \in \mathcal{M}$, then
$$ \mathcal{M}(l^2;z,w)= 
Z(w)^2 \prod_{P} \mathcal{R}_P(l^2;z,w),$$ where
$$  \mathcal{R}_P(l^2;z,w) = 
\begin{cases}  
1+ \frac{2  w^{d(P)}}{|P|^2 z^{d(P)} -1} + w^{2d(P)} - \frac{4w^{2 d(P)}}{1-\frac{1}{|P|^2 z^{d(P)}}} + \frac{2  w^{3 d(P)}}{1-\frac{1}{|P|^2 z^{d(P)}}}& \mbox{if } P \nmid l \\
(1-w^{d(P)})^2 \left( 1+ \left(1-\frac{1}{|P|^2 z^{d(P)}}\right)^{-1} \sum_{b=1}^{\infty} \frac{d_2(P^b) G(l^2, \chi_{P^b})}{|P|^{b/2}} w^{b d(P)}\right) & \mbox{if } P|l \end{cases} $$
Moreover, $\prod_P \mathcal{R}_P(l^2;z,w)$ converges absolutely for $|w|<q|z|$ and $|w|<q^{-1/2}$. \end{enumerate} \label{mv}
\end{lemma}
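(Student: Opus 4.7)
The approach is to use multiplicativity and write $\mathcal{M}(V;z,w)$ as an Euler product, then manipulate the local factors. Since $d_2$, the map $f\mapsto G(V,\chi_f)$ (part~(1) of Lemma~\ref{computeg}), and the product $\prod_{P\mid f}(1-|P|^{-2}z^{-d(P)})$ are each multiplicative in $f$, one obtains
$$
\mathcal{M}(V;z,w) = \prod_P \left(1 + \left(1-\frac{1}{|P|^2 z^{d(P)}}\right)^{-1} \sum_{b\geq 1} \frac{d_2(P^b)\, G(V,\chi_{P^b})}{|P|^{b/2}}\, w^{b\,d(P)}\right).
$$

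For part~(a) at a prime $P\nmid V$, part~(2) of Lemma~\ref{computeg} (with $\alpha=0$) kills the terms $b\geq 2$ and gives $G(V,\chi_P)=\left(\frac{V}{P}\right)|P|^{1/2}$, so the raw local factor collapses to $1+2\left(\frac{V}{P}\right)w^{d(P)}\bigl(1-|P|^{-2}z^{-d(P)}\bigr)^{-1}$. To factor out $\mathcal{L}(w,\chi_V)^2$, I multiply this by its reciprocal local factor $(1-\left(\frac{V}{P}\right)w^{d(P)})^2 = 1-2\left(\frac{V}{P}\right)w^{d(P)}+w^{2d(P)}$ (using $\left(\frac{V}{P}\right)^2=1$); expanding and simplifying the linear-in-$w^{d(P)}$ terms via $\frac{1}{1-x}-1=\frac{x}{1-x}$ yields the closed form for $\mathcal{M}_P(V;z,w)$ stated in the lemma. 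At a prime $P\mid V$, the reciprocal local factor of $\mathcal{L}(w,\chi_V)^2$ equals $1$, so $\mathcal{M}_P(V;z,w)$ remains the raw local factor.

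Part~(b) is obtained by specialization: since $\left(\frac{l^2}{P}\right)=1$ for $P\nmid l$ and $0$ for $P\mid l$, one has $\mathcal{L}(w,\chi_{l^2})=\mathcal{Z}(w)\prod_{P\mid l}(1-w^{d(P)})$, whence $\mathcal{L}(w,\chi_{l^2})^2=\mathcal{Z}(w)^2\prod_{P\mid l}(1-w^{d(P)})^2$. Pulling $\mathcal{Z}(w)^2$ to the front and absorbing the remaining $(1-w^{d(P)})^2$ factors into the local terms at $P\mid l$ rewrites $\mathcal{M}(l^2;z,w)$ as $\mathcal{Z}(w)^2\prod_P \mathcal{R}_P(l^2;z,w)$ with the Euler factors claimed.

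Finally, for absolute convergence of $\prod_P \mathcal{R}_P(l^2;z,w)$, the finitely many factors with $P\mid l$ cause no issue. For $P\nmid l$, expanding gives $\mathcal{R}_P-1 = O\bigl(|w|^{d(P)}/(|P|^2|z|^{d(P)})\bigr)+O(|w|^{2d(P)})$; using the prime polynomial bound $\#\{P:d(P)=n\}\ll q^n/n$, the first estimate is summable precisely when $|w|<q|z|$ and the second precisely when $|w|<q^{-1/2}$. The main (but purely mechanical) obstacle is the algebraic verification of the closed form of $\mathcal{M}_P$ in the $P\nmid V$ case, ensuring all cross-terms collapse into the stated expression; once that identity is checked, the rest is formal multiplicativity and standard Euler-product estimates.
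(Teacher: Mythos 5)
Your proposal is correct and takes essentially the same route as the paper, which proves Lemma \ref{mv} precisely by invoking the multiplicativity of $f \mapsto G(V,\chi_f)$ (Lemma \ref{computeg}) and manipulating Euler products. Your write-up merely makes explicit the steps the paper leaves to the reader: evaluating $G(V,\chi_{P^b})$ at primes $P \nmid V$, multiplying by the reciprocal local factors of $\mathcal{L}(w,\chi_V)^2$, specializing to $V=l^2$, and the standard prime-counting estimate for the stated region of absolute convergence.
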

\begin{proof}
We use the fact that $G(V,\chi_f)$ is multiplicative as a function of $f$ and then we manipulate Euler products.
\end{proof}
\begin{lemma}
Let $$\mathcal{R}(z,w)= \sum_{l \in \mathcal{M}} z^{d(l)} \prod_P \mathcal{R}_P(l^2;z,w),$$ with $\mathcal{R}_P (l^2;z,w)$ defined in Lemma \ref{mv}. Then
$$ \mathcal{R}(z,w) = \mathcal{Z}(z) \mathcal{Z}(qw^2z)  \mathcal{Z} \left( \frac{1}{q^2z} \right) \mathcal{F}(z,w) ,$$ where $\mathcal{F}(z,w) = \prod_P  \mathcal{F}_P(z,w)$, with \newline
$ \mathcal{F}_P(z,w)= (1-w^{d})^2  \left( 1- \frac{1-2|P|^2 (wz)^d-2|P|(w^2z)^d+2|P|^2(wz^2)^d+(-2|P|^3+3|P|^2)(w^2z^2)^d+|P|^2(w^4z^2)^d-|P|^3(w^4z^3)^d}{|P|^2z^d(1-|P|w^{2d}z^d)} \right)$ (here $d$ stands for $d(P)$.) \newline Moreover, $\mathcal{F}(z,w)$ is absolutely convergent for $|z|>1/q, |w|<1/\sqrt{q}, |wz|<1/q$ and $|w^2z|< 1/q^2$. \label{rz}
\end{lemma}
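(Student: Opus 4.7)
The plan is to exploit the multiplicativity of both $z^{d(l)}$ and the local factors $\mathcal{R}_P(l^2;z,w)$ in $l$ to express $\mathcal{R}(z,w)$ as an Euler product, compute each local factor in closed form via Lemma \ref{computeg}, and then identify the three expected polar pieces corresponding to $\mathcal{Z}(z)$, $\mathcal{Z}(qw^2z)$ and $\mathcal{Z}(1/(q^2z))$. The key preliminary observation is that $\mathcal{R}_P(l^2;z,w)$ depends on $l$ only through the $P$-adic valuation $\alpha = v_P(l)$: if $P\nmid l$ this is immediate, and if $P\mid l$ it follows from Lemma \ref{computeg}, since writing $l = l_1 P^\alpha$ with $P\nmid l_1$ the Gauss sum $G(l^2, \chi_{P^b})$ sees $l_1$ only through $\bigl(\tfrac{l_1^2}{P}\bigr)=1$. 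Factoring $z^{d(l)} = \prod_P z^{\alpha d(P)}$ and interchanging then gives
$$ \mathcal{R}(z,w) = \prod_P \mathcal{R}_{P,\mathrm{tot}}(z,w), \qquad \mathcal{R}_{P,\mathrm{tot}}(z,w) := \mathcal{R}_P^{(0)}(z,w) + \sum_{\alpha=1}^{\infty} z^{\alpha d(P)}\,\mathcal{R}_P^{(\alpha)}(z,w), $$
where $\mathcal{R}_P^{(\alpha)}$ denotes the second branch of Lemma \ref{mv}(b) with $v_P(l) = \alpha$.

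Next, with the shorthand $Q = |P|,\, u = w^{d(P)},\, v = z^{d(P)}$ and applying Lemma \ref{computeg} to $l^2 = l_1^2 P^{2\alpha}$, the inner sum $\sum_{b\geq 1} d_2(P^b)\,G(l^2,\chi_{P^b})\,Q^{-b/2} u^b$ truncates at $b=2\alpha+1$: only the even terms $b=2j$ with $1\leq j\leq \alpha$ (contributing $(2j+1)(1-Q^{-1})Q^j u^{2j}$) and the single odd endpoint $b=2\alpha+1$ (contributing $(2\alpha+2)Q^\alpha u^{2\alpha+1}$) survive. Substituting and swapping the $\alpha$- and $j$-summations reduces the computation to three closed-form geometric series in $v$, namely $\sum_{\alpha\geq 1} v^\alpha$, $\sum_{j\geq 1}(2j+1)(Qu^2 v)^j$ and $\sum_{\alpha\geq 1}(2\alpha+2)(Qu^2 v)^\alpha$, producing $\mathcal{R}_{P,\mathrm{tot}}$ as an explicit rational function of $u,v,Q$ with denominator $(1-v)(1-Qu^2 v)^2(1-Q^{-2}v^{-1})$. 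A useful consistency check is that $\mathcal{R}_P^{(0)}$ already contains a factor of $(1-u)^2$, since both $\mathcal{R}_P^{(0)}$ and its $u$-derivative vanish at $u=1$; hence $(1-u)^2 \mid \mathcal{R}_{P,\mathrm{tot}}$, matching the explicit prefactor of $\mathcal{F}_P$.

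Third, I would factor out the three expected polar pieces $(1-v)^{-1},\, (1-Qu^2 v)^{-1},\, (1-Q^{-2}v^{-1})^{-1}$ --- the local factors at $P$ of $\mathcal{Z}(z),\,\mathcal{Z}(qw^2 z),\,\mathcal{Z}(1/(q^2 z))$ --- and define $\mathcal{F}_P := (1-v)(1-Qu^2 v)(1-Q^{-2}v^{-1})\, \mathcal{R}_{P,\mathrm{tot}}$. A direct but lengthy simplification matches this with the formula for $\mathcal{F}_P$ stated in the lemma. For absolute convergence of $\prod_P \mathcal{F}_P$, one observes that the potentially dangerous linear-in-$u$ contribution to $\mathcal{F}_P-1$ cancels between the $(1-u)^2$ prefactor and the leading $-1/(Q^2 v)$ term in the numerator, so that the residual dominant terms are of sizes $O(|u|^2),\, O(Q^{-2}|v|^{-1}),\, O(|uv|),\, O(Q|u^2 v|)$ and smaller; summing over $P$, these are absolutely summable over primes precisely under the stated conditions $|z|>1/q,\, |w|<1/\sqrt{q},\, |wz|<1/q,\, |w^2 z|<1/q^2$. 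The main obstacle is the algebraic bookkeeping in this last step: verifying that pulling out the three polar factors reproduces \emph{exactly} the polynomial numerator written in the statement --- including the delicate cancellation eliminating the $-2u$ linear term in $\mathcal{F}_P-1$, which is precisely what permits the weaker convergence condition $|w|<1/\sqrt{q}$ rather than $|w|<1/q$ --- is long but entirely mechanical.
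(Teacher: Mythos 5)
Your proposal is correct and follows essentially the same route as the paper, whose proof of this lemma is literally the one line ``use Lemma \ref{mv} and then manipulate Euler products'': your multiplicativity-in-$l$ factorization, the evaluation of the local sums via Lemma \ref{computeg} (truncation at $b=2\alpha+1$ with the even terms $(2j+1)(1-|P|^{-1})|P|^j w^{2jd(P)}$ and the odd endpoint $(2\alpha+2)|P|^{\alpha}w^{(2\alpha+1)d(P)}$), the geometric-series summation giving local denominator $(1-z^{d})(1-|P|w^{2d}z^{d})^2(1-|P|^{-2}z^{-d})$, the extraction of the three zeta local factors, and the convergence analysis via cancellation of the linear $w^{d(P)}$ term are exactly that computation carried out in detail. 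The consistency checks you include (the $(1-w^{d})^2$ divisibility of the $P\nmid l$ factor and the matching of the dominant terms with the stated region $|z|>1/q$, $|w|<1/\sqrt{q}$, $|wz|<1/q$, $|w^2z|<1/q^2$) are accurate.
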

\begin{proof}
We use Lemma \ref{mv} and then manipulate Euler products.
\end{proof}

\begin{lemma}
Let $\alpha(z) = \frac{\frac{1}{q} \frac{d}{dw} \mathcal{F}(z,w) \rvert_{w=1/q}}{\mathcal{F}(z,1/q)}$, with $\mathcal{F}(z,w)$ defined in the previous lemma, and let $\mathcal{F}(z) = \mathcal{F} \left(z, \frac{1}{q} \right)$.

(a) We have $$\mathcal{F} (z) =  \prod_P \left(1-\frac{1}{|P|} \right)^2 \left(1+ \frac{2}{|P|}+\frac{1}{|P|^3} - \frac{1}{|P|^2} (z^{d(P)}+z^{-d(P)}) \right),$$ and $\mathcal{F}(z)=\mathcal{F}(1/z)$.

(b) We have $$ \alpha(z) =   \sum_P  \frac{ 2 d(P) (|P|^2+ z^{d(P)}(-3|P|^3-3|P|+|P|^2)+ z^{2d(P)}(|P|^4-|P|^3+4|P|^2-|P|+2)+z^{3d(P)}(|P|^2-2|P|))}{(|P|-1)(z^{d(P)}-|P|)(|P|-z^{d(P)}-2|P|^2z^{d(P)}-|P|^3z^{d(P)}+|P|z^{2d(P)})},$$  and
 $$\alpha(1/z) = \alpha(z) -\frac{2(1+z)}{1-z} - 4z \frac{\mathcal{F}'(z)}{\mathcal{F}(z)}.$$ 
\label{sim}
\end{lemma}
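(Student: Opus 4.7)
The plan is to prove both parts by direct algebraic manipulation of the Euler product from Lemma \ref{rz}, specialized at $w=1/q$, and then to exploit $z \leftrightarrow 1/z$ symmetry prime-by-prime.

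For part (a), substituting $w=1/q$ simplifies things dramatically because $w^{d(P)} = q^{-d(P)} = 1/|P|$, so the prefactor $(1-w^{d(P)})^2$ becomes $(1-1/|P|)^2$ and the denominator $|P|^2 z^{d(P)}(1-|P|w^{2d(P)}z^{d(P)})$ becomes $|P|^2 z^{d(P)}(1-z^{d(P)}/|P|)$. Substituting $w^{d(P)} = 1/|P|$ throughout the numerator and collecting terms yields a rational function in $z^{d(P)}$ which, after routine simplification, factors so that the full expression equals $(1-1/|P|)^2 (1 + 2/|P| + 1/|P|^3 - (z^{d(P)} + z^{-d(P)})/|P|^2)$. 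The identity $\mathcal{F}(z)=\mathcal{F}(1/z)$ is then immediate since each local factor is palindromic in $z^{d(P)}$.

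For the first assertion of part (b), we use logarithmic differentiation: $\alpha(z) = (1/q) \sum_P \partial_w \log \mathcal{F}_P(z,w)|_{w=1/q}$. Differentiating the closed form for $\mathcal{F}_P(z,w)$ in $w$, evaluating at $w=1/q$, and dividing by $\mathcal{F}_P(z,1/q)$ from part (a) yields, after mechanical rearrangement, the stated rational function of $z^{d(P)}$.

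For the functional equation, the strategy is prime-by-prime. Write $\alpha(z) = \sum_P \alpha_P(z)$ and $4z \mathcal{F}'(z)/\mathcal{F}(z) = \sum_P L_P(z)$ with $L_P(z) = 4z \partial_z \log \mathcal{F}_P(z,1/q)$. Since $\mathcal{F}_P(z,1/q)$ is palindromic in $z^{d(P)}$, differentiating gives $L_P(1/z) = -L_P(z)$ prime-by-prime. Comparing the explicit formula for $\alpha_P(z)$ under $z \to 1/z$ reduces the question to a polynomial identity in $z^{d(P)}$ with coefficients in $|P|$. The residual prime-by-prime contributions are then summed using the standard identity $\sum_P d(P) z^{d(P)}/(1-z^{d(P)}) = qz/(1-qz)$ together with its Laurent counterpart at $z \to 1/z$, and the combined geometric series produces exactly the closed form $-2(1+z)/(1-z)$. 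The main obstacle is the termwise polynomial identity: the numerator of $\alpha_P(z)$ is a cubic in $z^{d(P)}$ and the denominator a product of three factors, so bookkeeping is heavy. A more elegant alternative, if it can be located, is to find a functional equation for the two-variable $\mathcal{F}(z,w)$ under a joint transformation $(z,w)\mapsto(1/z,\tilde{w}(z,w))$, from which the stated relation would follow by applying $w\partial_w$ at $w=1/q$; this would produce both correction terms simultaneously without a prime-by-prime verification.
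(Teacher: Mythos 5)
Your proposal is correct and follows essentially the same route as the paper: part (a) by direct substitution of $w=1/q$ into the Euler product of Lemma \ref{rz}, and the functional equation by exploiting the palindromic symmetry of the local factors prime-by-prime together with the zeta logarithmic-derivative identity $\sum_P d(P)u^{d(P)}/(1-u^{d(P)})=qu/(1-qu)$ (applied at $u=z/q$) to produce the $-2(1+z)/(1-z)$ term. The paper merely packages this as the decomposition $\alpha(z)=\frac{2z}{1-z}-g(z)-4h(z)$ with $g(1/z)=g(z)$ and $h(1/z)=h(z)+z\mathcal{F}'(z)/\mathcal{F}(z)$, which is the same termwise symmetry argument you describe.
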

\begin{proof}
The first part follows directly by computation from Lemma \ref{rz}. For the second part, we rewrite
$$ \alpha(z) = \sum_P \frac{2 d(P)}{\frac{|P|}{z^{d(P)}} -1} - g(z)-4h(z) = \frac{2z}{1-z} - g(z)-4h(z), $$ with
$$g(z) = \sum_P \frac{ d(P) (6z^{d(P)}-4|P|z^{d(P)}+6|P|^2z^{d(P)} -2|P|-2|P|z^{2d(P)})}{(|P|-1)(z^{d(P)}+2|P|^2z^{d(P)}+|P|^3z^{d(P)}-|P|-|P|z^{2d(P)})},$$ and
$$ h(z) = \sum_P \frac{ d(P) |P| z^{2d(P)}}{z^{d(P)}+2|P|^2z^{d(P)}+|P|^3z^{d(P)}-|P|-|P|z^{2d(P)}}.$$  
Note that from the definition of $g(z)$ and $h(z)$ and from the expression for $\mathcal{F}(z)$, we have $g(z)=g(1/z)$ and 
$$h(1/z) = h(z) + z \frac{\mathcal{F}'(z)}{\mathcal{F}(z)}.$$  Combining these, the conclusion now follows.
\end{proof}
\subsection{Proof of Lemma \ref{lemma:sec}}
\label{pf1}
We now begin the proof of Lemma \ref{lemma:sec}. Recall the formula \eqref{s1s} for $S_{2g}(V=\square)$. Using \eqref{perron} twice, we have
$$ S_{2g}(V=\square) =  q^{2g+1} \frac{1}{2 \pi i} \oint_{|z|=r_1}   \sum_{\substack{f \in \mathcal{M}_{\leq 2g} \\ d(f) \text{ even}}} \frac{1}{|f|} \sum_{\substack{C \in \mathcal{M}_{\leq g-1} \\ C|f^{\infty}}} \frac{(qz-1)}{(1-z) z^{\frac{d(f)}{2}-g+d(C)}} \left( \sum_{l \in \mathcal{M}} z^{d(l)}   \frac{d_2(f) G(l^2, \chi_f)}{\sqrt{|f|}} \right)  \left(1-\frac{1}{qz} \right)\, dz, $$ where we pick $r_1 = q^{-1-\epsilon}$. We can extend the sum over $C | f^{\infty}$ with $d(C) \leq g-1$ to include all polynomials $C|f^{\infty}$ similarly as in \cite{aflorea}, at the expense of a term of size $q^{g(1+\epsilon)}$. Since
$$ \sum_{C | f^{\infty}} \frac{1}{|C|^2 z^{d(C)}} = \prod_{P|f} \left(1- \frac{1}{|P|^2 z^{d(P)}} \right)^{-1},$$ we have
\begin{align*}
   S_{2g}(V=\square) &=  q^{2g+1} \frac{1}{2 \pi i} \oint_{|z|=r_1} \frac{(qz-1)z^g}{1-z}  \sum_{l \in \mathcal{M}} z^{d(l)}\sum_{\substack{f \in \mathcal{M}_{\leq 2g} \\ d(f) \text{ even}}} \frac{1}{|f|z^{\frac{d(f)}{2}}}   \frac{d_2(f) G(l^2, \chi_f)}{\sqrt{|f|} \prod_{P|f} \left(1- \frac{1}{|P|^2 z^{d(P)}} \right) } \left(1-\frac{1}{qz} \right)\, dz \\
   &+ O(q^{g(1+\epsilon)}). \end{align*}
Using Lemma \ref{mv}, equation \eqref{perron} and Lemma \ref{rz}, it follows that
$$  S_{2g}(V=\square) =  q^{2g+1} \left( \frac{1}{2 \pi i} \right)^2 \oint_{|z|=r_1} \oint_{|w|=r_2} \frac{(qz-1)z^g (q^2w^2z)^{-g}}{(1-z)w(1-qw)^2 (1-q^2w^2z)}  \mathcal{R}(z,w)\left(1-\frac{1}{qz} \right) \, dw \, dz + O(q^{g(1+\epsilon)}),$$ where recall that $r_1=q^{-1-\epsilon}$ and $r_2<1/q$. Using Lemma \ref{rz} again, we have
$$ S_{2g} (V= \square) = -q^{2g+1} \left( \frac{1}{2 \pi i} \right)^2 \oint_{|z|=r_1} \oint_{|w|=r_2} \frac{z^g (q^2w^2z)^{-g} \mathcal{Z}(1/(q^2z)) \mathcal{F}(z,w)}{(1-z)w(1-qw)^2 (1-q^2w^2z)^2} \left(1-\frac{1}{qz} \right) \, dw \, dz + O(q^{g(1+\epsilon)}).$$ From Lemma \ref{rz}, $\mathcal{Z}(1/(q^2z))\mathcal{F}(z,w)$ is absolutely convergent for $|w|<1/\sqrt{q}, |wz|<1/q$ and $|w^2z|<1/q^2$, so in the integral above we can shift the contour $|z|=q^{-1-\epsilon}$ to $|z|=q^{\epsilon-1}$ without encountering any poles. Then we have
$$ S_{2g} (V= \square) = - q   \left( \frac{1}{2 \pi i} \right)^2 \oint_{|z|=r_1} \oint_{|w|=r_2} \frac{\mathcal{F}(z,w)}{(1-z)(1-qw)^2(1-q^2w^2z)^2 w^{2g+1}} \, dw \, dz + O(q^{g(1+\epsilon)}),$$
where $r_1=q^{\epsilon-1}$ and $r_2<1/q$. Enlarging the contour of integration $|w|=r_2$ to $|w|= q^{-1/2-\epsilon}$, we encounter a double pole at $w=1/q$, and the double integral $ \left( \frac{1}{2 \pi i} \right)^2 \oint_{|z|=r_1} \oint_{|w|=q^{-1/2-\epsilon}}$ will be bounded by $q^{g(1+\epsilon)}$. We compute the residue at $w=1/q$ and using the notation from Lemma \ref{sim} we have
\begin{equation*}
\sg = - q^{2g+1} \oint_{|z|=\frac{q^{\epsilon}}{q}} \frac{\mathcal{F}(z)}{(1-z)^3} \left( 2g+1 - \frac{4z}{1-z} - \alpha(z) \right) \, dz + O(q^{g(1+\epsilon)}).
\end{equation*}
Similarly
\begin{equation*}
\sgm=  - q^{2g+1}  \oint_{|z|=\frac{q^{\epsilon}}{q}} \frac{z\mathcal{F}(z)}{(1-z)^3} \left( 2g-1 - \frac{4z}{1-z} - \alpha(z) \right) \, dz + O(q^{g(1+\epsilon)}).
\end{equation*}
Combining the above and using the fact that $S_2(V= \square) = \sg+\sgm$, it follows that
\begin{align}
S_2(V=\square)= -q^{2g+1} \frac{1}{2 \pi i}  \oint_{|z|=\frac{q^{\epsilon}}{q}} \frac{\mathcal{F}(z) (1+z)}{(1-z)^3} \left( 2g+1- \frac{6z}{1-z^2}-\frac{2z^2}{1-z^2} - \alpha(z) \right)  \, dz \label{formula}  +O(q^{g(1+\epsilon)}) .\nonumber
\end{align} 
 From Lemma \ref{sim}, note that $\mathcal{F}(z)$ is analytic for $1/q<|z|<q$. We can compute the integral above exactly using the symmetry properties of $\mathcal{F}$ and $ \alpha$, by making the change of variables $z=1/u$. Combining the functional equations for $\mathcal{F}(z)$ and $\alpha(z)$ as given in Lemma \ref{sim} and using the fact that
 $$ \frac{1}{2 \pi i}  \oint_{|z|=\frac{q^{\epsilon}}{q}} \frac{z(1+z) \mathcal{F}'(z)}{(1-z)^3}  \, dz= - \frac{1}{2\pi i}  \oint_{|z|=\frac{q^{\epsilon}}{q}} \frac{\mathcal{F}(z)(z^2+4z+1)}{(1-z)^4} \, dz ,$$ it follows that
 \begin{align*}
 & \frac{1}{2 \pi i}  \oint_{|z|=\frac{q^{\epsilon}}{q}} \frac{\mathcal{F}(z) (1+z)}{(1-z)^3} \left( 2g+1- \frac{6z}{1-z^2}-\frac{2z^2}{1-z^2} - \alpha(z) \right)  \, dz \\
 & = - \frac{1}{2 \pi i} \oint_{|z|=\frac{q}{q^{\epsilon}}} \frac{\mathcal{F}(z) (1+z)}{(1-z)^3} \left( 2g+1- \frac{6z}{1-z^2}-\frac{2z^2}{1-z^2} - \alpha(z) \right)  \, dz.
 \end{align*}
 Note that in the annulus between $|z|=q^{\epsilon-1}$ and $|z|=q^{1-\epsilon}$ there is only one pole of the integrand at $z=1$. Hence from the identity above, we can explicitly evaluate the integral as
$$  \frac{1}{2 \pi i} \oint_{|z|=\frac{q^{\epsilon}}{q}} \frac{\mathcal{F}(z) (1+z)}{(1-z)^3} \left( 2g+1- \frac{6z}{1-z^2}-\frac{2z^2}{1-z^2} - \alpha(z) \right)  \, dz  = -\frac{ \text{Res}(z=1)}{2}.$$
Then
  \begin{equation*}
  S_2(V=\square) =  \frac{q^{2g+1}}{2} \text{Res}(z=1) + O(q^{g(1+\epsilon)}). 
 \end{equation*} Computing the residue at $z=1$ gives that
 $$ S_2(V=\square)  = \frac{q^{2g+1}}{\zeta(2)} P_2(2g+1) + O(q^{g(1+\epsilon)}),$$ where $P_2(2g+1) = 2 \zeta(2) \text{Res}(z=1),$ and we compute it explicitly as
 \begin{align}
 P_2(x)&= -x \frac{\zeta(2)}{2} (\mathcal{F}'(1)+\mathcal{F}''(1)) \label{p2x} \\
 &- \zeta(2) \left[ 2\mathcal{F}'(1)+4\mathcal{F}''(1)+\mathcal{F}^{(3)}(1)+\frac{\alpha(1)(\mathcal{F}'(1)+\mathcal{F}''(1))}{2} +  \frac{\alpha'(1)(\mathcal{F}(1)+\mathcal{F}'(1))}{2} +\frac{ \alpha''(1) \mathcal{F}(1) }{2} \right]. \nonumber
 \end{align}  This finishes the proof of Lemma \ref{lemma:sec}. 
\section{Evaluating the error from non-square $V$}
In this section, we will bound $S_{2g}(V \neq \square)$ and $S_{2g-1}(V \neq \square)$. Recall that $S_{2g}(V \neq \square) = S_{2g,\text{o}} + S_{2g,\text{e}}(V \neq \square)$, with $ S_{2g,\text{o}}$ given by \eqref{odd} and $S_{2g,\text{e}}(V \neq \square)$ the sum over non-square polynomials $V$ in \eqref{s1e}. We will prove the following.
\begin{lemma}
Using the same notation as before, we have
$$S_{2g}(V \neq \square) \ll q^{g(1+\epsilon)},$$ and
$$S_{2g-1}(V \neq \square) \ll q^{g(1+\epsilon)}.$$
\label{errorsquare}
\end{lemma}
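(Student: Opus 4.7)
The plan is to express $S_{2g,\text{e}}(V \neq \square)$ (the non-square part of \eqref{s1e}) and $S_{2g,\text{o}}$ from \eqref{odd} as double contour integrals following the template of Section~\ref{pf1}, then exploit the non-principality of $\chi_V$ for non-square $V$ to shift the $w$-contour past $|w|=1/q$ out to the edge $|w|=q^{-1/2-\epsilon}$. The argument for $S_{2g-1}(V \neq \square)$ is entirely analogous. Concretely, apply Perron's formula \eqref{perron} in a variable $w$ dual to $d(f)$ and a variable $z$ dual to $d(C)$, and extend the sum over $C \mid f^{\infty}$ with $d(C) \leq g-1$ to all such $C$ at admissible cost just as in Section~\ref{pf1}. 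For each non-square $V$, the inner sum over $f$ is then packaged by Lemma~\ref{mv}(a) into $\mathcal{L}(w,\chi_V)^2 \prod_P \mathcal{M}_P(V;z,w)$, so $S_{2g,\text{e}}(V \neq \square)$ becomes a sum over non-square $V$ (with $d(V) \ll g$ forced by the range constraint in \eqref{s1e}) of a double contour integral in $(z,w)$.

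The key analytic input is that for non-square $V = V_1 V_2^2$ with $V_1$ squarefree of positive degree, the character $\chi_V$ is non-principal, so $\mathcal{L}(w,\chi_V)$ factors as $\mathcal{L}^*(w,\chi_{V_1})$ times a short local polynomial at primes of $V_2$. By Weil's Riemann hypothesis for curves, all zeros of $\mathcal{L}^*(w,\chi_{V_1})$ lie on $|w|=q^{-1/2}$, so $\mathcal{L}(w,\chi_V)^2$ is analytic in $|w|<q^{-1/2}$ with the bound $|\mathcal{L}(w,\chi_V)|^2 \ll |V|^{2\log_q 2 + \epsilon}$ on $|w|=q^{-1/2-\epsilon}$. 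The Euler product $\prod_P \mathcal{M}_P(V;z,w)$ is absolutely convergent there and of size $|V|^{\epsilon}$ uniformly in non-square $V$, after verifying via Lemma~\ref{computeg} that the local factor at each $P \mid V$ given in Lemma~\ref{mv}(a) is $1 + O(|P|^{-1-\delta})$ on the contour. Then shift the $w$-contour from $|w|<1/q$ out to $|w|=q^{-1/2-\epsilon}$; since $\chi_V$ is non-principal no pole from $\mathcal{L}(w,\chi_V)^2$ is crossed, only the explicit poles of the rational prefactor at $w=1/q$ and at $1-q^2 w^2 z = 0$.

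The residue contributions at these explicit poles, after summing over $V$ in the admissible range, are handled by the same manipulations as in Section~\ref{pf1} and yield $O(q^{g(1+\epsilon)})$. The integral on the shifted contour is controlled by the same size count as in Section~\ref{pf1} (where on $|w|=q^{-1/2-\epsilon}$ the double integral was shown to contribute $O(q^{g(1+\epsilon)})$); the additional factor $|V|^{2\log_q 2 + \epsilon}$ per term is absorbed by extracting cancellation in the $V$-sum: expand $\mathcal{L}(w,\chi_V)^2 = \sum_g d_2(g)\chi_V(g)w^{d(g)}/\sqrt{|g|}$ as a Dirichlet series, swap summation, and apply Lemma~\ref{poissonmonic} once more to the inner sum $\sum_V \chi_V(g)$, so that the dual character sum is bounded directly. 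The treatment of $S_{2g,\text{o}}$ from \eqref{odd} proceeds verbatim with the extra $\sqrt{q}$ absorbed into $\epsilon$, and $S_{2g-1}(V \neq \square)$ follows by replacing $g$ by $g - 1/2$ where appropriate.

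The chief obstacle is the insufficiency of the Weil bound alone: naively, $|\mathcal{L}(w,\chi_V)|^2 \ll |V|^{2\log_q 2}$ on the critical circle combined with the count $q^{d(V)}$ of polynomials of given degree loses a factor of $4^g$ and gives only a $(4q)^g$-type bound, short of $q^{g(1+\epsilon)}$. The required saving therefore comes from cancellation in the $V$-sum, extracted via the Dirichlet series expansion of $\mathcal{L}^2$ together with a second Poisson inversion on $V$ through Lemma~\ref{poissonmonic}, reducing the problem to a dual character sum over a much shorter range. Verifying this cancellation, together with the uniformity in $V$ of the Euler-product bound at bad primes $P \mid V$, is the delicate part of the proof; once in place, the contour shift delivers $S_{2g}(V \neq \square), S_{2g-1}(V \neq \square) \ll q^{g(1+\epsilon)}$.
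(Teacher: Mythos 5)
There is a genuine gap, and it sits exactly where you locate the difficulty. Your contour setup (Perron in $w$ and in a variable dual to $d(C)$, Lemma \ref{mv}(a), shifting $w$ out to $|w|=q^{-1/2-\epsilon}$, using that $\mathcal{L}(w,\chi_V)$ is a polynomial for non-square $V$ so no new poles are crossed) is essentially the paper's, but the bounding step is not, and as proposed it does not close. First, your claim that $\prod_P \mathcal{M}_P(V;z,w)$ is absolutely convergent and of size $|V|^{\epsilon}$ on the shifted contour, verified only at the primes $P \mid V$, fails in the relevant range: with the $C$-variable on a circle of radius $q^{-\epsilon}$ (so $z=1/(q^2u)$, $|u|=q^{-\epsilon}$), the local factors at $P \nmid V$ contain the terms $2\left(\tfrac{V}{P}\right)(wu^{j})^{d(P)}$ for every $j \geq 1$, so one must first peel off $\mathcal{L}(wu,\chi_V)^2 \cdots \mathcal{L}(wu^{k-1},\chi_V)^2$ (with $k$ minimal so that $q^{-k\epsilon}\cdot q^{-1/2-\epsilon}<1/q$, i.e. $k \asymp 1/\epsilon$) before the remaining Euler product converges. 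This is what the paper does, and it means several $L$-values near the critical circle must be controlled, not one; under the naive Weil bound $|V|^{\log_q 2}$ per factor the loss is of $4^{k\,d(V)}$ type, even worse than the $4^{g}$ you flag.

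Second, and decisively, the mechanism you invoke to recover this loss --- cancellation in the $V$-sum obtained by expanding $\mathcal{L}(w,\chi_V)^2$ as a Dirichlet series and applying Lemma \ref{poissonmonic} a second time --- is only gestured at, and it is where all the work would have to happen: the $V$-dependence does not separate cleanly from the residual Euler product $\mathcal{B}(V;w,u)$ nor from the peeled-off factors $\mathcal{L}(wu^{j},\chi_V)$, the diagonal (square) terms of the dual sum must be re-extracted and matched against the error target, and the second Poisson application leads you back to sums of Gauss sums of the same nature you started with. The paper needs no cancellation over $V$ at all: it sums over $V$ trivially and instead uses the subexponential (Lindel\"of-type) pointwise bound $|\mathcal{L}(wu^{j},\chi_V)| \ll \exp\big(\tfrac{d(V)}{2\log_q (d(V)/2)} + 4\sqrt{q\, d(V)}\big) = |V|^{o(1)}$, quoted from Theorem 3.3 of \cite{altug} together with the remarks in \cite{aflorea}; with that bound, the trivial count $q^{d(V)}$ of the $V$'s, combined with the prefactors in \eqref{err1}, already yields $S_{2g}(V \neq \square), S_{2g-1}(V \neq \square) \ll q^{g(1+\epsilon)}$. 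The missing ingredient in your proposal is precisely this strong pointwise bound (or a fully worked-out substitute for it); without it the argument is incomplete.
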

\begin{proof}
In equation \eqref{odd}, write $S_{2g,\text{o}}$ as a difference of two terms, and let $S_{1,\text{o}}$ denote the first term and $S_{2,\text{o}}$ the second. We will bound the term $S_{1,\text{o}}$ and then bounding $S_{2,\text{o}}, S_{2g,\text{e}}(V \neq \square)$ will follow similarly. We use the fact that
$$ \sum_{\substack{C \in \mathcal{M}_i \\ C | f^{\infty} }} \frac{1}{|C|^2} = \frac{1}{2 \pi i} \oint_{|u|=r_1} \frac{1}{q^{2i}u^{i+1} \prod_{P|f} (1-u^{d(P)})}, $$ where $r_1<1$. If we let $d(f)=n$ and $d(C)=i$, then 
\begin{equation}
S_{1,\text{o}} = q^{2g+1} \sqrt{q} \frac{1}{2 \pi i} \oint_{|u|=r_1} \sum_{\substack{n=0 \\ n \text{ odd}}}^{2g} q^{-n}   \sum_{i=0}^g \frac{1}{q^{2i} u^{i+1}} \sum_{V \in \mathcal{M}_{n-2g-2+2i}} \sum_{f \in \mathcal{M}_n} \frac{ d_2(f) G(V,\chi_f)|f|^{-1/2}}{\displaystyle \prod_{P|f} (1-u^{d(P)})} \, du . \label{err1}
\end{equation} 
We express the sum over $f$ as a contour integral and use Lemma \ref{mv} to write
$$  \sum_{f \in \mathcal{M}_n}  \frac{d_2(f) G(V,\chi_f) |f|^{-1/2}}{ \displaystyle \prod_{P|f} (1- u^{d(P)})} = \frac{1}{2 \pi i} \oint_{|w|=r_2} \frac{\mathcal{L}(w,\chi_V)^2 \prod_P \mathcal{M}_P(V;1/(q^2u),w)}{w^{n+1}} \, dw.$$ From Lemma \ref{mv}, note that $\prod_P \mathcal{M}_P(V;1/(q^2u),w)$ converges for $|wu|<1/q, |w|<1/\sqrt{q}$ and $|u|<1$. We pick $r_1=q^{-\epsilon}$ and $r_2=q^{-1/2-\epsilon}$. Let $k$ be the least integer such that $ r_1^k r_2<1/q$. Then we can write
$$ \prod_P \mathcal{M}_P(V;1/(q^2u),w) = \mathcal{L}(wu,\chi_V)^2 \mathcal{L}(wu^2,\chi_V)^2 \cdot \ldots \cdot \mathcal{L}(wu^{k-1}, \chi_V)^2 \mathcal{B}(V;w,u),$$ where $\mathcal{B}(V;w,u)$ is given by a converging Euler product. Hence we bound
\begin{equation}
 \left|  \sum_{f \in \mathcal{M}_n}  \frac{d_2(f) G(V,\chi_f) |f|^{-1/2}}{ \displaystyle \prod_{P|f} (1- u^{d(P)})} \right| \ll q^{n/2(1+\epsilon)} \left| \mathcal{L}(w,\chi_V) \cdot \ldots \cdot \mathcal{L}(wu^{k-1},\chi_V) \right|^2. \label{err2} \end{equation} Note that the degree of $V$ is odd, so $V$ can't be a square. Using theorem $3.3$ in \cite{altug} and the remarks in the proof of Lemma $7.1$ in \cite{aflorea}, it follows that
$$ | \mathcal{L}(wu^j,\chi_V) | \ll e^{ \frac{n-2g+2i}{ 2 \log_q(n/2-g+i)} + 4\sqrt{q(n-2g+2i)}}, $$ for $j \in \{0, \ldots, k-1 \}$. Using the bound above and combining equations \eqref{err2} and \eqref{err1}, we get that
$$ S_{1,\text{o}} \ll q^{g(1+\epsilon)} e^{\frac{2gk}{\log_q(g)} + 8 \sqrt{2gq} k } \ll q^{g(1+\epsilon)}.$$ Hence $S_{2g}(V \neq \square) \ll q^{g(1+\epsilon)}.$
\end{proof}

\label{error}
\section{Proof of Theorem \ref{mresult}}
Now we put together the results from the previous sections. Combining Lemma \ref{lemma:lemmamainterm}, equation \eqref{p1x}, Lemma \ref{lemma:sec}, equation \eqref{p2x} and Lemma \ref{errorsquare}, it follows that
$$ \sum_{D \in \mathcal{H}_{2g+1}} L \left(\frac{1}{2}, \chi_D \right)^2= \frac{q^{2g+1}}{\zeta(2)} P(2g+1) + O(q^{g(1+\epsilon)}),$$ where $P(x)=P_1(x)+P_2(x)$ is the following degree $3$ polynomial
\begin{align}
P(x) & = x^3  \frac{\bc(1/q) (1-q^{-1}) }{24} + x^2 \left[ \frac{\bc(1/q) (1+q^{-1})}{4} - \frac{\bc'(1/q) (1-q^{-1})}{4q}  \right] \nonumber \\
&+ x \left[ \frac{11 \bc(1/q) (1-q^{-1})}{24} + \frac{3 \bc'(1/q)(1-q^{-1})}{2q}  - \frac{2 \bc'(1/q)}{q} + \frac{ \bc''(1/q) (1-q^{-1})}{2q^2}  - \frac{\zeta(2)}{2} (\mathcal{F}'(1)+\mathcal{F}''(1)) \right] \nonumber \\
&+ \frac{ \bc(1/q) (1+q^{-1})}{4} - \frac{ \bc' (1/q) (1-q^{-1})}{4q} + \frac{ 2 \bc'(1/q)}{q^2} +\frac{2 \bc''(1/q)}{q^3} - \frac{ \bc^{(3)} (1/q) (1-q^{-1})}{3q^3}  \nonumber \\
&- \zeta(2) \left[ 2\mathcal{F}'(1)+4\mathcal{F}''(1)+\mathcal{F}^{(3)}(1)+ \frac{\alpha'(1)(\mathcal{F}(1)+\mathcal{F}'(1))}{2}+ \frac{\alpha(1)(\mathcal{F}'(1)+\mathcal{F}''(1))}{2}+\frac{\mathcal{F}(1) \alpha''(1)}{2} \right] \label{px}
\end{align}
\begin{remark}
We can check that the answer above matches the conjectured result in \eqref{conjpol}. 
For a polynomial $Q$, let $[x^i]Q$ denote the coefficient of $x^i$ in the polynomial $Q$.  We will check that $[x^i]P=[x^i]R$ for all $i  \in \{0,1,2,3\}$, with $R$ given by \eqref{conjpol}.
\begin{enumerate}[(i)]
\item We have that $[x^3] P =  \frac{\bc(1/q) (1-q^{-1}) }{24}$ and $[x^3] R = \frac{A(0,0)}{24}$. Using Lemma \ref{lemma:maintermlemma},
$$ \bc(1/q) = \prod_P \left(1 + \frac{1-3|P|}{|P|(|P|+1)^2} \right).$$ The identity then easily follows upon noticing that $\bc(1/q)   \prod_P (1-|P|^{-2}) = A(0,0)$ and that $\prod_P (1-|P|^{-2}) = \zeta(2)^{-1} = 1-q^{-1}$. 
\item Using equation \eqref{px}, we have $[x^2] P =  \frac{\bc(1/q) (1+q^{-1})}{4} - \frac{\bc'(1/q) (1-q^{-1})}{4q} $, while $[x^2] R = \frac{A(0,0)}{4} + \frac{A_1(0,0)+A_2(0,0)}{8 \log q}$. We compute $\bc'(1/q) = -q \bc(1/q) b_1$, where
$$b_1= \sum_P  \frac{ d(P) ( 3|P|^2-2|P|-1)}{(|P|+1)(|P|^3+2|P|^2-2|P|+1)}.$$ From the definition of $A(1/2;z_1,z_2)$ in \eqref{az}, we also compute that
$$ \frac{A_1(0,0)}{\log q}=\frac{A_2(0,0)}{\log q} = A(0,0) \left(  b_1 + \sum_P \frac{2 d(P)}{|P|^2-1} \right) = A(0,0) \left( b_1 + \frac{2}{q-1} \right),$$ where the last identity comes from the expression of the logarithmic derivative of $\zeta(s)$. Combining all of the above will give the desired identity.
\item We have that $$[x] P =  \frac{11 \bc(1/q) (1-q^{-1})}{24} + \frac{3 \bc'(1/q)(1-q^{-1})}{2q}  - \frac{2 \bc'(1/q)}{q} + \frac{ \bc''(1/q) (1-q^{-1})}{2q^2}  - \frac{\zeta(2)}{2} (\mathcal{F}'(1)+\mathcal{F}''(1)) $$ and
$$[x]R = \frac{11}{24} A(0,0)+ \frac{A_1(0,0)+A_2(0,0)}{\log q}+\frac{A_{12}(0,0)}{2 (\log q)^2}.$$ From the definition of $\mathcal{B}(u)$, we get that
$\bc''(1/q)=q^2 \bc(1/q) (b_1^2+b_1+b_2),$ with $b_1$ as before and
$$b_2= - \sum_P \frac{ d(P)^2 |P| (3-5|P|-2|P|^2-14|P|^3-|P|^4+3|P|^5)}{(|P|+1)^2 (|P|^3+2|P|^2-2|P|+1)^2}.$$ Also $\mathcal{F}(1) \zeta(2) = A(0,0), \mathcal{F}'(1)=0$ and $\mathcal{F}''(1)=\mathcal{F}(1) b_3$, where
$$b_3= - \sum_P \frac{ d(P)^2 |P| (2-4|P|+4|P|^2+2|P|^3)}{(|P|^3+2|P|^2-2|P|+1)^2}.$$ We compute
$$\frac{A_{12}(0,0)}{(\log q)^2}= A(0,0) \left( \left(b_1+\frac{2}{q-1} \right)^2+b_2-b_3- \sum_P  \frac{4 d(P)^2 |P|^2}{(|P|^2-1)^2} \right),$$ and using the fact that
$ \sum_P  \frac{4 d(P)^2 |P|^2}{(|P|^2-1)^2} = \frac{4}{(q-1)^2}+\frac{4}{q-1}$, we have
$$\frac{A_{12}(0,0)}{(\log q)^2}=  A(0,0) \left( \left(b_1+\frac{2}{q-1} \right)^2+b_2-b_3- \frac{4}{(q-1)^2} -\frac{4}{q-1} \right).$$
Doing the computations, we can check that $[x]P=[x]R$.
\item From equation \eqref{px}, we have
\begin{align*} [x^0]P &=  \frac{ \bc(1/q) (1+q^{-1})}{4} - \frac{ \bc' (1/q) (1-q^{-1})}{4q} + \frac{ 2 \bc'(1/q)}{q^2} +\frac{2 \bc''(1/q)}{q^3} - \frac{ \bc^{(3)} (1/q) (1-q^{-1})}{3q^3}  \\
&- \zeta(2) \left[ 2\mathcal{F}'(1)+4\mathcal{F}''(1)+\mathcal{F}^{(3)}(1)+ \frac{\alpha'(1)(\mathcal{F}(1)+\mathcal{F}'(1))}{2}+ \frac{\alpha(1)(\mathcal{F}'(1)+\mathcal{F}''(1))}{2}+\frac{\mathcal{F}(1) \alpha''(1)}{2} \right]. \end{align*} Also from \eqref{conjpol},
\begin{align*}
[x^0]R &= \frac{A(0,0)}{4} + \frac{1}{24 \log q} (A_1(0,0)+A_2(0,0)) + \frac{1}{(\log q)^2} A_{12}(0,0) \\
&- \frac{1}{12 (\log q)^3} (A_{222}(0,0)-3A_{122}(0,0)-3A_{112}(0,0)+A_{111}(0,0)). \end{align*}
We compute
$$\bc^{(3)}(1/q) = -q^3\bc(1/q) (b_1^3+3 b_1 b_2 + b_4 + 3b_1^3+3 b_2+2 b_1),$$ where
$$b_4 = \sum_P \frac{ d(P)^3 |P| (-3+6|P|-3|P|^2+91|P|^3-41|P|^4-29|P|^5-57|P|^6-55|P|^7-8|P|^8+3|P|^9)}{(|P|+1)^3 (|P|^3+2|P|^2-2|P|+1)^3}.$$
Also
$\mathcal{F}^{(3)}(1) = -3\mathcal{F}(1)b_3, \alpha(1) = 2 \displaystyle \left( b_1+ \frac{2}{q-1} \right),$
$$ \alpha'(1) = b_5 = \sum_P \frac{4 d(P)^2 |P|}{|P|^3+2|P|^2-2|P|+1},$$ and
$$\alpha''(1) = -b_5+b_6,$$ where
$$b_6 = \sum_P \frac{4 d(P)^3 |P|(|P|^3+|P|^2-1)}{(|P|^3+2|P|^2-2|P|+1)^2}.$$ 
Using \eqref{az}, 
$$\frac{A_{111}(0,0)}{ (\log q)^3}=\frac{A_{222}(0,0)}{(\log q)^3} = A(0,0) \left( \left(b_1+\frac{2}{q-1} \right)^3+3 \left( b_1+\frac{2}{q-1} \right)b_7+b_8 \right),$$ where
$$b_7 = -\sum_P \frac{ d(P)^2 |P| (5-21|P|+32|P|^2-16|P|^3-5|P|^4+9|P|^5)}{(|P|-1)^2 (|P|^3+2|P|^2-2|P|+1)^2},$$ and
$$b_8 = \sum_P \frac{ d(P)^3 |P|(9-46|P|+81|P|^2-35|P|^3-43|P|^4+29|P|^5+35|P|^6-29|P|^7-2|P|^8+17|P|^9)}{(|P|^4+|P|^3-4|P|^2+3|P|-1)^3}.$$
Similarly we compute
\begin{align*}
\frac{A_{122}(0,0)}{(\log q)^3} &= \frac{A_{112}(0,0)}{(\log q)^3} = A(0,0) \bigg( \left(b_1+\frac{2}{q-1} \right)^3+2 \left(b_1+\frac{2}{q-1} \right) \left( b_2-b_3-\frac{4}{(q-1)^2}-\frac{4}{q-1} \right) \\
&+ \left(b_1+\frac{2}{q-1} \right) b_7  +b_9 \bigg),\end{align*} where
\begin{align*}
\frac{b_9}{2}& = \frac{b_8}{6}-\frac{b_6}{2}+\frac{b_4}{3} + \sum_P \frac{ 8 d(P)^3 |P|^2(|P|^2+1)}{3(|P|^2-1)^3} \\
&= \frac{b_8}{6}-\frac{b_6}{2}+\frac{b_4}{3} + + \frac{8q(q+1)}{3(q-1)^3}. \end{align*} Combining all of the above will give the desired identity of coefficients.
\end{enumerate}
\end{remark}
\label{last}

\section{Proof of Theorem \ref{th2}}
\label{thirdmom}
Here we will prove Theorem \ref{th2}. Computing the third moment is similar to the computation of the second moment, so we will skip some of the details. 

Recall from section \ref{setup} that $S_{3g}= M_{3g}+S_{3g}(V = \square) + S_{3g}(V \neq \square)+S_{3g,\text{o}}+O(q^{3g/2(1+\epsilon)}),$ and a similar expression holds for $M_{3g-1}$.
\subsection{Main term}
Here we focus on the main term $M_{3g}$. Recall that 
\begin{equation}    M_{3g} = q^{2g+1}  \Big( 1-\frac{1}{q} \Big) \sum_{\substack{f \in \mathcal{M}_{\leq 3g} \\ f = \square}} \frac{d_3(f)}{|f|^{\frac{3}{2}}}  \phi(f)\sum_{\substack{C \in \mathcal{M}_{\leq g-1} \\ C | f^{\infty}}} \frac{1}{|C|^2}. \label{principal2} \end{equation}
We have the following.
\begin{lemma}
With the same notation as before,
$$M_{3g}+M_{3g-1} = \frac{q^{2g+1}}{\zeta(2)} Q_1(2g+1)+O(q^{g(1+\epsilon)}),$$ where $Q_1$ is a polynomial of degree $6$.
\end{lemma}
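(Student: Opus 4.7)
The proof will parallel that of Lemma \ref{lemma:lemmamainterm} for the second moment. First, writing $f=l^2$ with $l\in\mathcal{M}_{\le\lfloor 3g/2\rfloor}$ in \eqref{principal2}, observing that $C\mid f^{\infty}\Leftrightarrow C\mid l^{\infty}$, and replacing $\sum_{C\mid l^{\infty},\,d(C)\le g-1}|C|^{-2}$ by $\prod_{P\mid l}(1-|P|^{-2})^{-1}$ (at cost $O(q^{-g(2-\epsilon)})$, exactly as in the second-moment case), one uses $\phi(l^2)/|l|^2=\prod_{P\mid l}(1-|P|^{-1})$ to reduce $M_{3g}$ to
$$\frac{q^{2g+1}}{\zeta(2)}\sum_{l\in\mathcal{M}_{\le\lfloor 3g/2\rfloor}}\frac{d_3(l^2)}{|l|\prod_{P\mid l}(1+|P|^{-1})}+O(q^{g\epsilon}),$$
and analogously for $M_{3g-1}$.

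Next, I would introduce the generating series $\mathcal{A}_3(u):=\sum_{l\in\mathcal{M}}\frac{d_3(l^2)\,u^{d(l)}}{\prod_{P\mid l}(1+|P|^{-1})}$. From $\sum_{a\ge 0}d_3(P^{2a})x^a=(1+3x)/(1-x)^3$ and multiplicativity one obtains an explicit Euler product, which, after elementary manipulation, factors as
$$\mathcal{A}_3(u)=\frac{(1-qu^2)^6}{(1-qu)^6}\,\mathcal{C}(u),$$
where $\mathcal{C}(u)=\prod_{P}\mathcal{C}_P(u)$ with $\mathcal{C}_P(u)$ the local factor of $\mathcal{A}_3(u)$ divided by $(1+u^{d(P)})^6$. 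A direct Taylor expansion in $x=u^{d(P)}$ shows that the $x$- and $x^2$-coefficients of $\mathcal{C}_P(u)-1$ are $O(|P|^{-1})$, while the $x^3$-coefficient is only $O(1)$; summing over $P$ via the prime polynomial theorem then gives absolute convergence of $\mathcal{C}(u)$ on the disk $|u|<q^{-1/3}$. The reason for extracting the \emph{sixth} power $(1-qu^2)^6$, rather than the cube analogous to the second moment, is to absorb a universal $-3x^2$ term in the local correction that would otherwise confine convergence to $|u|<q^{-1/2}$, since $-3\sum_P u^{2d(P)}\sim 3\log(1-qu^2)$ as $u\to q^{-1/2}$.

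Applying Perron's formula \eqref{perron} exactly as in the proof of Lemma \ref{lemma:maintermlemma} then gives
$$M_{3g}=\frac{q^{2g+1}}{\zeta(2)}\cdot\frac{1}{2\pi i}\oint_{|u|=r_1}\frac{(1-qu^2)^6\,\mathcal{C}(u)}{(1-qu)^7(qu)^{\lfloor 3g/2\rfloor}}\,\frac{du}{u}+O(q^{g\epsilon})$$
with $r_1<1/q$, and an analogous formula for $M_{3g-1}$. The integrand has a pole of order $7$ at $u=1/q$, whose residue is a polynomial of degree $6$ in $\lfloor 3g/2\rfloor$, and hence in $2g+1$. Enlarging the contour to $|u|=q^{-1/3-\delta}$ for any small $\delta>0$ picks up this residue and leaves a remaining contour integral of size $O(q^{g(1+\epsilon)})$, a routine estimate using that $\mathcal{C}$ is analytic on the new contour and that $|(qu)^{-\lfloor 3g/2\rfloor}|\ll q^{-g(1-3\delta/2)}$ there. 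Summing $M_{3g}+M_{3g-1}$ and explicitly computing the two residues produces the claimed degree-$6$ polynomial $Q_1(2g+1)$.

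The principal obstacle, compared with the second-moment argument, is the enlarged-analyticity step described above: the naive analogue of the factorization $\mathcal{A}(u)=\mathcal{Z}(u)^3\mathcal{Z}(u^2)^{-1}\bc(u)$ from Lemma \ref{lemma:maintermlemma} would here produce an Euler product convergent only on $|u|<q^{-1/2}$, which is insufficient to shift the contour far enough to obtain the error $O(q^{g(1+\epsilon)})$. Once the correct factorization is identified, the rest is a longer but entirely routine adaptation of the residue computation that proves Lemma \ref{lemma:lemmamainterm}.
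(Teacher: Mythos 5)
Your proposal is correct and takes essentially the same route as the paper: reduce to $\sum_{l}d_3(l^2)/(|l|\prod_{P\mid l}(1+|P|^{-1}))$, factor the generating series so that the leftover Euler product converges for $|u|<q^{-1/3}$, then apply Perron's formula and shift the contour to $|u|=q^{-1/3-\epsilon}$, picking up the order-$7$ pole at $u=1/q$ whose residue gives the degree-$6$ polynomial and leaving an error $O(q^{g(1+\epsilon)})$. The only (immaterial) difference is bookkeeping: the paper writes $\mathcal{A}_3(u)=\mathcal{Z}(u)^6\mathcal{B}_3(u)$ and then $\mathcal{B}_3(u)=\mathcal{Z}(u^4)^6\mathcal{Z}(u^2)^{-6}\mathcal{C}(u)$, i.e.\ it also pulls out a factor $(1-qu^4)^{-6}$, which is analytic and non-vanishing in the disc $|u|<q^{-1/3}$, so your prefactor $(1-qu^2)^6(1-qu)^{-6}$ serves equally well.
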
 \label{thirdmain}
\begin{proof}
Similarly as in section \ref{main}, we rewrite
\begin{equation}
M_{3g} =  \frac{q^{2g+1}}{\zeta(2)}\sum_{l \in \mathcal{M}_{\leq \left[\frac{3g}{2} \right]}}  \frac{d_3(l^2)}{|l| \prod_{P|l} \left(1+\frac{1}{|P|} \right)} + O(q^{g \epsilon}).  \label{m3} \end{equation}
Let $$\mathcal{A}_3(u) = \sum_{l \in \mathcal{M}} u^{d(l)} \frac{d_3(l^2)}{ \prod_{P|l} \left(1+\frac{1}{|P|} \right)}.$$ Using Euler products, we get that
\begin{align*} \mathcal{A}_3(u) &= \prod_P \left( 1+  \frac{u^{d(P)}(6-3u^{d(P)}+u^{2d(P)})}{(1+ \frac{1}{|P|})(1-u^{d(P)})^3} \right) \\
&= \mathcal{Z}(u)^6 \mathcal{B}_3(u),\end{align*} where
\begin{equation}
\mathcal{B}_3(u)= \prod_{P} \left(1 - \frac{6u^{d(P)}-(15-6|P|)u^{2d(P)}+(20 - 8|P|) u^{3d(P)}-(15-3|P|)u^{4d(P)}+6u^{5d(P)}-u^{6d(P)}}{|P|+1} \right) .\label{bu}
\end{equation} From the expression of $\mathcal{B}_3(u)$ above, note that it converges absolutely for $|u|<\frac{1}{\sqrt{q}}$. We can further write
$$ \mathcal{B}_3(u) = \frac{ \mathcal{Z}(u^4)^6}{\mathcal{Z}(u^2)^6} \mathcal{C}(u),$$ where $\mathcal{C}(u)$ converges absolutely for $|u|< \frac{1}{q^{1/3}}.$ From the above we see that $\mathcal{B}_3(u)$ has an analytic continuation for $|u|<\frac{1}{q^{1/3}}$. 

Now using \eqref{perron} in \eqref{m3}, we get that
\begin{equation}
M_{3g} = \frac{q^{2g+1}}{\zeta(2)} \frac{1}{2\pi i} \oint_{|u|=r_1} \frac{\mathcal{B}_3(u)}{(1-qu)^7 (qu)^{[3g/2]}} \, \frac{du}{u}+O(q^{g \epsilon}), \label{int1}
\end{equation} where $r_1<1/q$.
Similarly
\begin{equation}
M_{3g-1}= \frac{q^{2g+1}}{\zeta(2)} \frac{1}{2\pi i} \oint_{|u|=r_1} \frac{\mathcal{B}_3(u)}{(1-qu)^7 (qu)^{[(3g-1)/2]}} \, \frac{du}{u}+O(q^{g \epsilon}). \label{int2}
\end{equation}
Note that in the two integrals above, by shifting the contour of integration to a circle around the origin of radius $R= q^{-1/3-\epsilon}$, we encounter a pole at $u=1/q$. Since $\mathcal{B}_3(u)$ has an analytic continuation for $|u|<q^{-1/3}$, we see that 
$$ M_{3g}=- \text{Res}(u=1/q) + O(q^{g(1+\epsilon)}),$$ and a similar formula holds for $M_{3g-1}$. By computing the residues at $u=1/q$ for $M_{3g}$ and $M_{3g-1}$, Lemma \ref{thirdmain} follows.
\end{proof}

\subsection{Secondary main term}
Here we will evaluate $S_3(V=\square)= S_{3g}(V=\square)+S_{3g-1}(V=\square)$, with $S_{3g}(V=\square)$ given by \eqref{s1s}. We'll prove the following.
\begin{lemma}
 With the same notation as before, we have
 $$ S_3(V=\square) = \frac{q^{2g+1}}{\zeta(2)} Q_2(2g+1)+O(q^{3g/2(1+\epsilon)}), $$ where $Q_2$ is a polynomial of degree $6$.
\label{secondarythird}
\end{lemma}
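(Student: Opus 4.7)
My plan is to follow the strategy of Section~\ref{secondary}, with $d_3$ replacing $d_2$; the essential simplification in the $k=3$ case is that the larger admissible error $O(q^{3g/2(1+\epsilon)})$ permits a direct contour shift where, for the second moment, a delicate functional equation argument was required.

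First, I apply the function-field Perron formula \eqref{perron} twice to \eqref{s1s} (with $k=3$ and $V=l^2$): once in a variable $z$ for the sum over $l$, coupled with $\sum_{C\mid f^\infty}|C|^{-2}z^{-d(C)}=\prod_{P\mid f}(1-|P|^{-2}z^{-d(P)})^{-1}$ after extending the $C$-sum to all $C\mid f^\infty$ at admissible cost (as in Section~\ref{secondary}), and once in $w$ for the sum over $f$. This expresses $S_{3g}(V=\square)$ as a double contour integral whose integrand is, up to rational normalizing factors in $z$, $w$, $qw$, and $q^2w^2z$, the series
\[ \mathcal{M}^{(3)}(z,w) \;=\; \sum_{l\in\mathcal{M}} z^{d(l)} \sum_{f\in\mathcal{M}} \frac{d_3(f)\,G(l^2,\chi_f)\,w^{d(f)}}{\sqrt{|f|}\,\prod_{P\mid f}\bigl(1-|P|^{-2}z^{-d(P)}\bigr)}. \]

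Second, I establish analogues of Lemmas~\ref{mv} and~\ref{rz} for $d_3$. By multiplicativity of both $d_3$ and $G(l^2,\chi_f)$, the local factors can be computed in closed form; the natural prefactor to isolate is $\mathcal{L}(w,\chi_{l^2})^3 = \mathcal{Z}(w)^3 \prod_{P\mid l}(1-w^{d(P)})^3$. Summing the remainder over $l$ against $z^{d(l)}$ and repackaging the Euler product extracts all zeta-type singularities explicitly (in $\mathcal{Z}(z)$, $\mathcal{Z}(1/(q^2z))$, and $\mathcal{Z}(qw^2z)$ to appropriate powers), leaving a remainder $\mathcal{F}_3(z,w)$ given by an absolutely convergent Euler product in a region of the form $|z|$ near $q^{-1}$, $|w|<q^{-1/2}$, $|wz|<q^{-1}$, and $|w^2z|<q^{-2}$.

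Third, I shift the $z$-contour into the analyticity region of $\mathcal{F}_3$ (radius $q^{-1+\epsilon}$), then enlarge the $w$-contour from $|w|<1/q$ out to $|w|=q^{-1/2-\epsilon}$. The integrand on the enlarged contour is bounded by $O(q^{3g/2(1+\epsilon)})$, within the allowed error; this is the key point where the third moment is easier than the second, since no $z$-functional-equation argument is required. Crossing the pole at $w=1/q$ (coming from $\mathcal{Z}(w)^3$ together with the rational Perron factors) yields the main term, a polynomial in $g$ whose coefficients are meromorphic in $z$ with a pole at $z=1$.

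Finally, I evaluate the remaining single $z$-integral by shifting the $z$-contour outward past $z=1$; the residue at $z=1$ contributes the additional $g$-dependence, and the shifted integral contributes $O(q^{g(1+\epsilon)})$. Combining with the analogous expression for $S_{3g-1}(V=\square)$ gives
\[ S_3(V=\square) \;=\; \frac{q^{2g+1}}{\zeta(2)}\,Q_2(2g+1) + O(q^{3g/2(1+\epsilon)}), \]
with $Q_2$ of degree $6$. The main obstacle is the second step: carrying out the Euler-product algebra cleanly enough to obtain an explicit factorization and a convergent remainder $\mathcal{F}_3$ on a polydisc large enough to accommodate the contour shifts of the third step; once this is done, the residue calculations producing $Q_2$ are routine but lengthy, which is exactly why the authors omit the details.
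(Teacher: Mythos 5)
Your overall route is the one the paper takes: Perron twice, an Euler-product factorization in the style of Lemmas \ref{mv} and \ref{rz} but with $d_3$ (these are the paper's Lemmas \ref{sumf} and \ref{sumv}), then enlarging the $w$-contour to $|w|=q^{-1/2-\epsilon}$ to pick up the triple pole at $w=1/q$, and finally shifting the $z$-contour past $z=1$ instead of invoking a functional equation, exactly because the allowed error $O(q^{3g/2(1+\epsilon)})$ is generous. However, there is a genuine gap in the analytic justification of that last shift. The polydisc you posit for the remainder $\mathcal{F}_3$ (modeled on the $k=2$ case: $|wz|<q^{-1}$, $|w^2z|<q^{-2}$, $|z|$ near $q^{-1}$) specializes at $w=1/q$ to $|z|<1$, so it does not even reach the pole at $z=1$, let alone permit moving the $z$-contour out to $|z|=q^{1-\epsilon}$. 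What the paper actually needs, and proves in Lemma \ref{sumv}, is (i) a two-variable region of the shape $|w|<q^{-1/2}$, $|zw|<q^{-1/2}$, $|zw^2|<q^{-3/2}$, $|z|>q^{-1}$ (note also that the correct zeta-factorization carries the extra factors $\mathcal{Z}(w^2z^2)^3/\mathcal{Z}(wz)^3$, which you do not account for), and (ii) a separate statement that the specialization $\mathcal{H}(z,1/q)$ converges for $q^{-1}<|z|<\sqrt{q}$ and continues analytically to $q^{-1}<|z|<q$. Without an ingredient of type (ii) your fourth step fails as written; you flag the size of the polydisc as the main obstacle for the third step, but the binding constraint is the fourth.

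A smaller inaccuracy: after taking the residue at $w=1/q$, the integrand contains $z^{-(\lceil 3g/2\rceil-g)}\approx z^{-g/2}$, and since $\mathcal{H}(z,1/q)$ is only analytic up to $|z|<q$, the shifted integral on $|z|=q^{1-\epsilon}$ is $O(q^{3g/2(1+\epsilon)})$ after the $q^{2g+1}$ prefactor, not $O(q^{g(1+\epsilon)})$ as you claim. This does not affect the statement of the lemma, whose error term is $O(q^{3g/2(1+\epsilon)})$ anyway, but the bookkeeping should be corrected.
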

Before proving the above, we will first state two additional lemmas. We'll omit the proofs.
\begin{lemma}
 Let $V$ be a monic polynomial in $\mathbb{F}_q[x]$. For $|z|>1/q^2$, let
$$\mathcal{A}(V;z,w) = \sum_{f \in \mathcal{M}} w^{d(f)} \frac{d_3(f) G(V,\chi_f)}{ \sqrt{|f|} \prod_{P|f} \left(1- \frac{1}{|P|^2z^{d(P)}} \right)}.$$ Then we have

(a) $$\mathcal{A}(V;z,w) = \mathcal{L}(w,\chi_V)^3 \prod_P \mathcal{H}_P(V;z,w), $$ where
$$\mathcal{H}_P(V;z,w) = 
\begin{cases}
1+ \frac{3 \left( \frac{V}{P} \right) w^{d(P)}}{|P|^2z^{d(P)}-1}+3w^{2d(P)}- \frac{9w^{2d(P)}}{1-\frac{1}{|P|^2z^{d(P)}}}- \left( \frac{V}{P} \right)w^{3d(P)}+ \frac{9 \left( \frac{V}{P} \right) w^{3d(P)}}{1-\frac{1}{|P|^2z^{d(P)}}}- \frac{3w^{4d(P)}}{1-\frac{1}{|P|^2z^{d(P)}}} & \mbox{if } P \nmid V \\
1+ \left(1-\frac{1}{|P|^2 z^{d(P)}}\right)^{-1} \sum_{i=1}^{\infty} \frac{w^{id(P)}d_3(P^i) G(V,\chi_{P^i})}{|P|^{i/2}} & \mbox{if } P|V
\end{cases}
$$
(b) If $V=l^2$ and $l \in \mathcal{M}$, then 
$$ \mathcal{A}(l^2;z,w) =  \mathcal{Z}(w)^3 \prod_P \mathcal{A}_P(l^2;z;w),$$ where
$$\mathcal{A}_P(l^2;z,w) =
\begin{cases}
 1+ \frac{3w^{d(P)}}{|P|^2z^{d(P)}-1}+3w^{2d(P)}- \frac{9w^{2d(P)}}{1-\frac{1}{|P|^2z^{d(P)}}}   -w^{3d(P)}+\frac{9w^{3d(P)}}{1-\frac{1}{|P|^2z^{d(P)}}}- \frac{3 w^{4d(P)}}{1-\frac{1}{|P|^2z^{d(P)}}} & \mbox{ if } P \nmid l \\
 (1-w^{d(P)})^3 \left( 1+ \sum_{i=1}^{\infty} \frac{w^{id(P)}d_3(P^i) G(l^2,\chi_{P^i})}{|P|^{i/2} \left(1- \frac{1}{|P|^2z^{d(P)}} \right)} \right) & \mbox{ if } P|l \end{cases}
$$
\label{sumf}
\end{lemma}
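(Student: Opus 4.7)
The plan is to establish part (a) by directly expanding $\mathcal{A}(V;z,w)$ as an Euler product, isolating the $\mathcal{L}(w,\chi_V)^3$ contribution from the local factors, and then specializing to $V=l^2$ for part (b). First, I would use the multiplicativity of $d_3$ and, by Lemma \ref{computeg}(1), of $G(V,\chi_f)$ in $f$. Combined with the observation that $\prod_{P|f}(1-1/(|P|^2 z^{d(P)}))^{-1}$ depends only on the primes dividing $f$, this yields
$$\mathcal{A}(V;z,w) = \prod_P \left[ 1 + \left(1 - \frac{1}{|P|^2 z^{d(P)}}\right)^{-1} \sum_{i=1}^{\infty} \frac{d_3(P^i) G(V,\chi_{P^i})}{|P|^{i/2}} w^{i d(P)} \right].$$

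For $P \nmid V$ (so $\alpha = 0$ in Lemma \ref{computeg}), only $i=1$ contributes, giving $G(V,\chi_P) = \left(\tfrac{V}{P}\right)|P|^{1/2}$ and $d_3(P)=3$, so the local factor collapses to $1 + 3\left(\tfrac{V}{P}\right) w^{d(P)}/(1 - 1/(|P|^2 z^{d(P)}))$. To produce the factor $\mathcal{L}(w,\chi_V)^3$, I would multiply and divide each such local factor by $\bigl(1 - \left(\tfrac{V}{P}\right)w^{d(P)}\bigr)^3$, expand the cube using $\left(\tfrac{V}{P}\right)^2 = 1$, and collect terms. Writing $\chi = \left(\tfrac{V}{P}\right)$, $y = w^{d(P)}$, $A = 1/(|P|^2 z^{d(P)})$, the identity $-3\chi y + 3\chi y/(1-A) = 3\chi y/(|P|^2 z^{d(P)} - 1)$ produces the first term of $\mathcal{H}_P$, and the remaining cross-terms in $(1-\chi y)^3(1 + 3\chi y/(1-A))$ match the listed expansion. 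For $P \mid V$, $\chi_V(P)=0$, so the Euler factor of $\mathcal{L}(w,\chi_V)^3$ at $P$ is trivial and $\mathcal{H}_P$ coincides with the original local factor as written.

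Part (b) follows by specializing $V=l^2$ in part (a). Since $\chi_{l^2}(P)=1$ for $P\nmid l$ and $\chi_{l^2}(P)=0$ for $P\mid l$, one has $\mathcal{L}(w,\chi_{l^2}) = \mathcal{Z}(w)\prod_{P\mid l}(1-w^{d(P)})$, hence $\mathcal{L}(w,\chi_{l^2})^3 = \mathcal{Z}(w)^3 \prod_{P\mid l}(1-w^{d(P)})^3$. Distributing the $(1-w^{d(P)})^3$ factors into the local product at primes dividing $l$ gives the piecewise formula for $\mathcal{A}_P(l^2;z,w)$, while at $P\nmid l$ the factor $\mathcal{A}_P$ is simply $\mathcal{H}_P$ evaluated at $\left(\tfrac{l^2}{P}\right)=1$. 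No substantive obstacle is expected --- the proof is a careful but routine Euler product manipulation, and the only delicate point is the algebraic bookkeeping in the $P\nmid V$ case after factoring out the cube.
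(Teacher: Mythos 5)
Your proposal is correct and is exactly the argument the paper intends: the paper omits the proof of this lemma, but its second-moment analogue (Lemma \ref{mv}) is proved by precisely the same route, namely multiplicativity of $G(V,\chi_f)$ in $f$ together with Euler product manipulation. Your local computations check out: for $P\nmid V$ only $i=1$ survives by Lemma \ref{computeg}, and $(1-\chi y)^3\bigl(1+\tfrac{3\chi y}{1-A}\bigr)$ with $\chi^2=1$ and $\tfrac{A}{1-A}=\tfrac{1}{|P|^2z^{d(P)}-1}$ reproduces $\mathcal{H}_P$, while specializing $V=l^2$ and absorbing $\prod_{P\mid l}(1-w^{d(P)})^3$ gives part (b).
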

 We also have the following.
\begin{lemma}
Keeping the notation from the previous lemma, let $$C(z,w) = \sum_{l \in \mathcal{M}} z^{d(l)} \prod_P \mathcal{A}_P(l^2;z,w).$$ 

(a) Then $$ \mathcal{C}(z,w) =  \mathcal{Z}(z) \mathcal{Z}(qw^2z)^6 \mathcal{Z} \left( \frac{1}{q^2z} \right) \frac{\mathcal{Z}(w^2z^2)^3}{\mathcal{Z}(wz)^3} \mathcal{H}(z,w),$$ where $\mathcal{H}(z,w)= \prod_P \mathcal{H}_P(z,w)$, and 
\begin{align*}
&\mathcal{H}_P(z,w) =(1-w^{d})^3 (1-|P|(w^2z)^{d})^3(1+(wz)^{d})^3  \Big(1+ 3 w^{d}+3|P|(zw^2)^{d}+ \frac{3w^{2d}}{|P|} - 3 (zw)^{d} - \frac{1}{|P|^2z^{d}} \\
& -6(zw^2)^{d} +|P|(zw^3)^{d} -3(zw^4)^{d}-|P|(z^2w^3)^{d} +3|P|(z^2w^4)^{d}+|P|(z^2w^6)^{d}-|P|^2(z^3w^6)^{d} \Big) .  \end{align*} (Here, $d$ stands for $d(P)$.)
Moreover, $\mathcal{H}(z,w)$ converges absolutely for $|w|<q^{-1/2}, |zw|<q^{-1/2}, |zw^2|<q^{-3/2},|z|>q^{-1}$.

(b)  We have
\begin{align*}
 \mathcal{H}_P \left(z, \frac{1}{q} \right) &= \left(1 -\frac{1}{|P|} \right)^3 \Big(1+\frac{3}{|P|}+\frac{3}{|P|^3} - \frac{1}{|P|^2z^{d(P)}} -\frac{5z^{d(P)}}{|P|^2}  -\frac{4z^{2d(P)}}{|P|^2} -\frac{6z^{2d(P)}}{|P|^3} - \frac{8z^{2d(P)}}{|P|^5}+\frac{14z^{3d(P)}}{|P|^4} \\
 &+ \frac{6z^{3d(P)}}{|P|^6} + \frac{6z^{4d(P)}}{|P|^4}  +\frac{6z^{4d(P)}}{|P|^7} -\frac{12 z^{5d(P)}}{|P|^6}  -\frac{8z^{5d(P)}}{|P|^8} - \frac{4z^{6d(P)}}{|P|^6} +\frac{6z^{6d(P)}}{|P|^7} +\frac{2z^{7d(P)}}{|P|^8} +\frac{3z^{7d(P)}}{|P|^{10}}\\
 & +\frac{z^{8d(P)}}{|P|^{8}} -\frac{3z^{8d(P)}}{|P|^{9}} -\frac{z^{8d(P)}}{|P|^{11}}  +\frac{z^{9d(P)}}{|P|^{10}}   \Big),
 \end{align*} and $ \mathcal{H}\left(z,\frac{1}{q} \right)$ converges absolutely for $q^{-1}<|z|<\sqrt{q}$ and has an analytic continuation when $q^{-1}<|z|<q$.
\label{sumv}
\end{lemma}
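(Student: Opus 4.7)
The plan is to prove both parts by direct Euler product manipulation, starting from the local formulas in Lemma \ref{sumf} and the Gauss sum evaluation in Lemma \ref{computeg}.

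For part (a), the key observation is that $\mathcal{A}_P(l^2;z,w)$ depends on $l$ only through the $P$-adic valuation $a = v_P(l)$, since the Gauss sums $G(l^2,\chi_{P^i})$ appearing in Lemma \ref{sumf} depend only on the $P$-part of $l^2$ (by Lemma \ref{computeg}). Writing $l = \prod_P P^{a_P}$ and factoring $z^{d(l)} = \prod_P z^{a_P d(P)}$, the sum defining $\mathcal{C}(z,w)$ becomes an Euler product $\prod_P \mathcal{E}_P(z,w)$, where
$$\mathcal{E}_P(z,w) = \mathcal{A}_P(1;z,w) + \sum_{a=1}^{\infty} z^{a d(P)}\, \mathcal{A}_P(P^{2a};z,w),$$
with the $a = 0$ summand coming from the ``$P \nmid l$'' branch of Lemma \ref{sumf} and the $a \geq 1$ summands from the ``$P \mid l$'' branch. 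For each $a \geq 1$, Lemma \ref{computeg} applied with $V = P^{2a}$ reduces the inner series over $i$ to finitely many terms: only the even $i$ with $1 \leq i \leq 2a$ (contributing $\phi(P^i)$) and the single index $i = 2a+1$ (contributing $|P|^{2a+1/2}$) survive. Substituting $d_3(P^{2k}) = (k+1)(2k+1)$ and $d_3(P^{2a+1}) = (a+1)(2a+3)$ and interchanging the order of summation, the resulting double sum collapses via standard generating identities such as $\sum_{k \geq 0}(k+1)(2k+1) t^k = (1+3t)/(1-t)^3$, giving $\mathcal{E}_P(z,w)$ in closed form as a rational function in $z^{d(P)},\, w^{d(P)}$ and $|P|$.

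From this closed form I pull out the five local pieces $(1 - z^{d(P)})^{-1}$, $(1 - (qw^2z)^{d(P)})^{-6}$, $(1 - (1/(q^2z))^{d(P)})^{-1}$, $(1 - (w^2z^2)^{d(P)})^{-3}$ and $(1 - (wz)^{d(P)})^{3}$, which, after taking the product over $P$, assemble into the four zeta-type factors $\mathcal{Z}(z)\,\mathcal{Z}(qw^2z)^6\, \mathcal{Z}(1/(q^2z))\,\mathcal{Z}(w^2z^2)^3/\mathcal{Z}(wz)^3$ stated in the lemma. What remains is, by definition, the local factor $\mathcal{H}_P(z,w)$ recorded in the statement, and a direct comparison verifies that the leftover polynomial matches the one written out. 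Absolute convergence of $\prod_P \mathcal{H}_P(z,w)$ in the region $|w|<q^{-1/2},\ |zw|<q^{-1/2},\ |zw^2|<q^{-3/2},\ |z|>q^{-1}$ is checked by Taylor-expanding $\mathcal{H}_P - 1$: the slowest-decaying terms are of size $O(|P|^{-1-\delta})$ uniformly in that polydisc, so the Euler product converges by comparison with $\prod_P(1 + c|P|^{-1-\delta})$.

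For part (b), I substitute $w = 1/q$ into the closed form of $\mathcal{H}_P(z,w)$. The prefactor $(1-w^{d(P)})^3(1-|P|(w^2z)^{d(P)})^3(1+(wz)^{d(P)})^3$ collapses to $(1 - |P|^{-1})^3 (1 - |P|^{-2} z^{2d(P)})^3$, and multiplying this through with the polynomial second factor yields the claimed explicit formula for $\mathcal{H}_P(z,1/q)$. Absolute convergence of $\prod_P \mathcal{H}_P(z,1/q)$ in $q^{-1} < |z| < \sqrt{q}$ follows from the two slowest-decaying $z$-dependent terms, $-1/(|P|^2 z^{d(P)})$ (forcing $|z| > q^{-1}$) and $-5 z^{d(P)}/|P|^2$ (forcing $|z| < \sqrt{q}$). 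To extend analytically to the larger annulus $q^{-1} < |z| < q$, I will factor out additional zeta-type pieces $\mathcal{Z}(z^j/q^k)^{\pm m_{j,k}}$ absorbing the higher-$z$-power contributions $z^{jd(P)}/|P|^k$ whose naive radius of convergence lies inside $|z| < q$ (notably the $-4z^{6d(P)}/|P|^6 = -4(z/q)^{6d(P)}$ term and the $z^{9d(P)}/|P|^{10}$ term that saturates at $|z|=q$); the residual Euler product is then absolutely convergent on the full annulus, and the extracted $\mathcal{Z}$-factors are rational in $z$ and hence meromorphic on $\mathbb{C}^\times$, giving the required analytic continuation. The main obstacle is the algebraic bookkeeping in the closed-form evaluation of $\mathcal{E}_P(z,w)$ and the verification that pulling out the five local zeta factors yields exactly the stated $\mathcal{H}_P(z,w)$: the intermediate rational expression is lengthy, and matching it against the given Euler factor amounts to a substantial polynomial identity in $z^{d(P)}, w^{d(P)}, |P|$. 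A secondary difficulty in part (b) is correctly identifying the auxiliary zeta pieces needed for the analytic continuation and confirming that the residual product has no hidden singularities in the annulus $q^{-1}<|z|<q$.
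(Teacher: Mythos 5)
Your proposal is correct and follows essentially the paper's own (omitted) route: Lemma \ref{sumv} is the third-moment analogue of Lemma \ref{rz}, which the paper proves by ``using Lemma \ref{mv} and manipulating Euler products,'' i.e. precisely your prime-by-prime factorization of $\sum_l z^{d(l)}\prod_P \mathcal{A}_P(l^2;z,w)$ using the Gauss-sum evaluation of Lemma \ref{computeg}, extraction of the zeta factors, and direct substitution $w=1/q$ for part (b). One bookkeeping correction to your convergence discussion: the term limiting absolute convergence of $\mathcal{H}(z,1/q)$ to $|z|<\sqrt{q}$ is $-4z^{2d(P)}/|P|^2$ (the term $-5z^{d(P)}/|P|^2$ already allows $|z|<q$), and the obstructions to be absorbed for the continuation to $q^{-1}<|z|<q$ are exactly the terms $-4z^{2d(P)}/|P|^2+6z^{4d(P)}/|P|^4-4z^{6d(P)}/|P|^6+z^{8d(P)}/|P|^8$, i.e. the nonconstant part of $\left(1-(z^2/q^2)^{d(P)}\right)^4$, so factoring out $\mathcal{Z}(z^2/q^2)^{-4}$ accomplishes what you describe (the $z^{9d(P)}/|P|^{10}$ term is harmless on the open annulus).
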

\begin{proof}[Proof of Lemma \ref{secondarythird}]
Recall that 
\begin{align*}
S_{3g}(V= \square) &= q^{2g+1}\sum_{\substack{f \in \mathcal{M}_{\leq 3g} \\ d(f) \text{ even}}} \frac{d_3(f)}{|f|^{\frac{3}{2}}}\sum_{\substack{C \in \mathcal{M}_{\leq g-1} \\ C | f^{\infty}}} |C|^{-2} \bigg[ (q-1)  \sum_{ l \in \mathcal{M}_{\leq \frac{d(f)}{2}-g-2+d(C)} } G(l^2, \chi_f) - \sum_{l \in \mathcal{M}_{\frac{d(f)}{2}-g-1+d(C)}} G(l^2,\chi_f) \nonumber \\
  &- \frac{q-1}{q}  \sum_{ l \in \mathcal{M}_{\leq \frac{d(f)}{2}-g-1+d(C)} } G(l^2, \chi_f) + \frac{1}{q} \sum_{l \in \mathcal{M}_{\frac{d(f)}{2}-g+d(C)}} G(l^2,\chi_f) \bigg] . 
  \end{align*} 
  We proceed similarly as in section \ref{secondary}, and after using Lemmas \ref{sumf} and \ref{sumv}, we get that
  $$S_{3g}(V=\square) = -q^{2g+1}  \frac{1}{(2 \pi i)^2} \oint_{|z|=r_1} \oint_{|w| =r_2}  \frac{z^g(q^2w^2z)^{-[3g/2]} (1-qwz)^3}{w(1-z)(1-qw)^3(1-q^2w^2z)^7(1-qw^2z^2)^3} \mathcal{H}(z,w) \, dw \, dz+O(q^{3g/2(1+\epsilon)}),$$ where $r_2<1/q$ and $r_1=q^{\epsilon-1}$. We enlarge the contour $|w|=r_2$ to $|w|=q^{-1/2-\epsilon}$, and we encounter a pole of order $3$ at $w=1/q$. By Lemma \ref{sumv}, $\mathcal{H}(z,w)$ is analytic in this region. When $r_1=q^{\epsilon-1}$ and $|w|=q^{-1/2-\epsilon}$, the double integral is bounded by $O(q^{-g(1-\epsilon)})$.

 We evaluate the residue at $w=1/q$, and we get that
 \begin{equation*}
 S_{3g}(V=\square) =  -q^{2g+1} \frac{1}{2 \pi i} \oint_{|z|=q^{\epsilon-1}} \frac{\mathcal{H}(z,1/q)}{z^{[3g/2]-g} (z-1)^7} P_1(z,g) \, dz ,
 \end{equation*} where $P_1(z,g)$ is a polynomial in $z$ and $g$.
  In the expression for $S_{3g}(V=\square)$ above, the integrand has a pole of order $7$ at $z=1$. By Lemma \ref{sumv}, $\mathcal{H}(z,1/q)$ is analytic for $q^{-1}<|z|<\sqrt{q}$ and has an analytic continuation when $|z|<q$. By shifting the contour of integration to $|z|=q^{1-\epsilon}$, we encounter the pole at $z=1$ and we bound the integral over the new contour by $O(q^{3g/2(1+\epsilon)})$. We do the same for $S_{3g-1}(V=\square)$, and adding the two terms gives that $S_{3}(V=\square) = \frac{q^{2g+1}}{\zeta(2)} Q_2(2g+1)+O(q^{3g/2(1+\epsilon)}),$ where the polynomial $Q_2$ has degree $6$ and can be computed explicitly by evaluating the residue at $z=1$. This finishes the proof of Lemma \ref{secondarythird}.
 \end{proof}
  \subsection{Error from non-square $V$}
Here we'll show that $S_{3g}(V \neq \square) \ll q^{3g/2(1+\epsilon)}$. The proof is similar to the one in section \ref{error}. It is enough to bound the term $S_{3g,\text{o}}$ given by \eqref{odd} (bounding $S_{3g,\text{e}}(V \neq \square)$ follows in the same way.) In equation \eqref{odd}, we write $S_{3g,\text{o}}$ as a difference of two terms. Similarly as in section \ref{error}, we want to bound
\begin{equation}
S_{3,\text{o}} = q^{2g+1} \sqrt{q} \frac{1}{2 \pi i} \oint_{|u|=r_1} \sum_{\substack{n=0 \\ n \text{ odd}}}^{3g} q^{-n}   \sum_{i=0}^g \frac{1}{q^{2i} u^{i+1}} \sum_{V \in \mathcal{M}_{n-2g-2+2i}} \sum_{f \in \mathcal{M}_n} \frac{ d_3(f) G(V,\chi_f)|f|^{-1/2}}{\displaystyle \prod_{P|f} (1-u^{d(P)})} \, du,  \label{err4}
\end{equation} where $r_1<1$. By Lemma \ref{sumf}, 
$$  \sum_{f \in \mathcal{M}_n}  \frac{d_3(f) G(V,\chi_f) |f|^{-1/2}}{ \displaystyle \prod_{P|f} (1- u^{d(P)})} = \frac{1}{2 \pi i} \oint_{|w|=r_2} \frac{\mathcal{L}(w,\chi_V)^3 \prod_P \mathcal{H}_P(V;1/(q^2u),w)}{w^{n+1}} \, dw,$$ where $\prod_P  \mathcal{H}_P(V;1/(q^2u),w)$ converges for $|wu|<1/q, |w|<1/\sqrt{q}$ and $|u|<1$. We pick $r_1=q^{-\epsilon}$ and $r_2=q^{-1/2-\epsilon},$ and let $k$ be minimal such that $r_1^kr_2<1/q$. Then
 $$ \prod_P \mathcal{H}_P(V;1/(q^2u),w) = \mathcal{L}(wu,\chi_V)^3 \mathcal{L}(wu^2,\chi_V)^3 \cdot \ldots \cdot \mathcal{L}(wu^{k-1}, \chi_V)^3 \mathcal{D}(V;w,u),$$ where $\mathcal{D}(V;w,u)$ is given by a converging Euler product. Then 
 \begin{equation*}
 \left|  \sum_{f \in \mathcal{M}_n}  \frac{d_3(f) G(V,\chi_f) |f|^{-1/2}}{ \displaystyle \prod_{P|f} (1- u^{d(P)})} \right| \ll q^{n/2(1+\epsilon)} \left| \mathcal{L}(w,\chi_V) \cdot \ldots \cdot \mathcal{L}(wu^{k-1},\chi_V) \right|^3.  \end{equation*} Combining this with the bound
 $$ | \mathcal{L}(wu^j,\chi_V) | \ll e^{ \frac{n-2g+2i}{ 2 \log_q(n/2-g+i)} + 4\sqrt{q(n-2g+2i)}}, $$ for $j \in \{0, \ldots, k-1 \}$, and trivially bounding the sum over $V$ gives that $ S_{3,\text{o}} \ll q^{3g/2(1+\epsilon)}$. Then $S_{3g}(V \neq \square) \ll q^{3g/2(1+\epsilon)}.$ 
 Combining this bound with Lemmas \ref{secondarythird} and \ref{thirdmain} and putting $Q(x)=Q_1(x)+Q_2(x)$ finishes the proof of Theorem \ref{th2}. 
 
 \begin{remark}
 We note that 
$$[x^6]Q_1= \frac{ 729}{2^{11} 6! } \mathcal{B}_3(1/q),$$ which follows from evaluating the residues in the integrals \eqref{int1} and \eqref{int2}. Also
$$[x^6]Q_2= - \frac{217}{2^{11} 6!} \mathcal{H}(1,1/q) \zeta(2)^4,$$ which follows from evaluating the residue at $z=1$ in the integral for $S_{3g}(V=\square)$ above. By direct computation, we have that $\mathcal{B}_3(1/q)= \mathcal{H}(1,1/q) \zeta(2)^4 = A_3 \left(\frac{1}{2};0,0,0 \right),$ where $A_3 \left(\frac{1}{2};0,0,0 \right)$ is given by equation \eqref{a3}. Combining the two equations above and since $Q=Q_1+Q_2$, we have
$$[x^6] Q = \frac{1}{2880} A_3 \left(\frac{1}{2};0,0,0 \right),$$ which matches the leading coefficient in the conjectured formula \eqref{thirdconj}.
 \end{remark} 
 \break
 \textsc{Acknowledgments.} I would like to thank Kannan Soundararajan for his suggestions and for the many helpful discussions we've had while working on this problem.
\bibliography{biblvariance}

\begin{thebibliography}{10}

\bibitem{altug}
Salim~Ali Altu{\u{g}} and Jacob Tsimerman.
\newblock Metaplectic {R}amanujan conjecture over function fields with
  applications to quadratic forms.
\newblock {\em Int. Math. Res. Not. IMRN}, (13):3465--3558, 2014.

\bibitem{keatingandrade}
J.~C. Andrade and J.~P. Keating.
\newblock The mean value of {$L(\frac12,\chi)$} in the hyperelliptic ensemble.
\newblock {\em J. Number Theory}, 132(12):2793--2816, 2012.

\bibitem{conjectures}
J.~C. Andrade and J.~P. Keating.
\newblock Conjectures for the integral moments and ratios of {$L$}-functions
  over function fields.
\newblock {\em J. Number Theory}, 142:102--148, 2014.

\bibitem{ak2}
Julio~C. Andrade and Jonathan~P. Keating.
\newblock Mean value theorems for {$L$}-functions over prime polynomials for
  the rational function field.
\newblock {\em Acta Arith.}, 161(4):371--385, 2013.

\bibitem{cfkrs}
J.~B. Conrey, D.~W. Farmer, J.~P. Keating, M.~O. Rubinstein, and N.~C. Snaith.
\newblock Integral moments of {$L$}-functions.
\newblock {\em Proc. London Math. Soc. (3)}, 91(1):33--104, 2005.

\bibitem{dgh}
Adrian Diaconu, Dorian Goldfeld, and Jeffrey Hoffstein.
\newblock Multiple {D}irichlet series and moments of zeta and {$L$}-functions.
\newblock {\em Compositio Math.}, 139(3):297--360, 2003.

\bibitem{aflorea}
Alexandra Florea.
\newblock Improving the error term in the mean value of {$L(1/2,\chi_D)$} in
  the hyperelliptic ensemble.
\newblock {\em preprint}.

\bibitem{hardyl}
G.~H. Hardy and J.~E. Littlewood.
\newblock Contributions to the theory of the {R}iemann zeta-function and the
  theory of the distribution of primes.
\newblock {\em Acta Math.}, 41(1):119--196, 1916.

\bibitem{hayes}
D.~R. Hayes.
\newblock The expression of a polynomial as a sum of three irreducibles.
\newblock {\em Acta Arith.}, 11:461--488, 1966.

\bibitem{ingham}
A.E. Ingham.
\newblock Mean-value theorems in the theory of the {R}iemann zeta-function.
\newblock {\em Proc. London Math. Soc.}, 27:273--300, 1926.

\bibitem{jutila}
M.~Jutila.
\newblock On the mean value of {$L({1\over 2},\,\chi )$} for real characters.
\newblock {\em Analysis}, 1(2):149--161, 1981.

\bibitem{keatingsnaith}
J.~P. Keating and N.~C. Snaith.
\newblock Random matrix theory and {$L$}-functions at {$s=1/2$}.
\newblock {\em Comm. Math. Phys.}, 214(1):91--110, 2000.

\bibitem{ksnaith}
J.~P. Keating and N.~C. Snaith.
\newblock Random matrix theory and {$\zeta(1/2+it)$}.
\newblock {\em Comm. Math. Phys.}, 214(1):57--89, 2000.

\bibitem{rosen}
Michael Rosen.
\newblock {\em Number theory in function fields}, volume 210 of {\em Graduate
  Texts in Mathematics}.
\newblock Springer-Verlag, New York, 2002.

\bibitem{rubinstein}
M.~O. {Rubinstein} and K.~{Wu}.
\newblock Moments of zeta functions associated to hyperelliptic curves over
  finite fields.
\newblock {\em preprint}, July 2014.

\bibitem{sound}
K.~Soundararajan.
\newblock Nonvanishing of quadratic {D}irichlet {$L$}-functions at
  {$s=\frac12$}.
\newblock {\em Ann. of Math. (2)}, 152(2):447--488, 2000.

\bibitem{weil}
Andr{\'e} Weil.
\newblock {\em Sur les courbes alg\'ebriques et les vari\'et\'es qui s'en
  d\'eduisent}.
\newblock Actualit\'es Sci. Ind., no. 1041, Publ. Inst. Math. Univ. Strasbourg
  {\bf 7}. Hermann et Cie., Paris, 1948.

\bibitem{young2}
Matthew~P. Young.
\newblock The third moment of quadratic {D}irichlet {L}-functions.
\newblock {\em Selecta Math. (N.S.)}, 19(2):509--543, 2013.

\bibitem{zhang}
Qiao Zhang.
\newblock On the cubic moment of quadratic {D}irichlet {$L$}-functions.
\newblock {\em Math. Res. Lett.}, 12(2-3):413--424, 2005.

\end{thebibliography}
\bibliographystyle{plain}

\end{document}